\documentclass[12pt]{amsart}
\usepackage[utf8]{inputenc}
\usepackage[T1]{fontenc}
\usepackage{amsmath,amssymb,amsthm,graphics,amscd,mathrsfs}
\usepackage{amscd, amsfonts, amssymb, amsthm}
\usepackage{fullpage, verbatim}
\usepackage[colorlinks, breaklinks, linkcolor=blue]{hyperref}
\usepackage{breakurl}
\usepackage{array}
\usepackage{latexsym}
\usepackage[shortlabels]{enumitem}
\usepackage{parskip}

\newcommand{\ft}{\operatorname{T}}

\newcounter{myrow}

\usepackage{xcolor}
\usepackage{subcaption}
\usepackage{tikz-cd}
\usepackage{tikz}
\usetikzlibrary{arrows}
\usetikzlibrary{shapes}
\usepackage{tkz-graph}
\usetikzlibrary{decorations}
\usetikzlibrary{arrows,decorations.markings}
\usepackage{multirow}
\tikzstyle{v} = [circle, draw, inner sep=2pt, minimum size=3pt, fill=black]
\tikzstyle{l} = [rectangle, draw, rounded corners]

\usepackage{caption}
\usetikzlibrary{calc}

\renewcommand{\phi}{\varphi}
\renewcommand{\rho}{\varrho}
\renewcommand{\epsilon}{\varepsilon}
\newcommand{\leer}{\varnothing}
\renewcommand{\tilde}{\widetilde}

\newcommand\CG{{\mathcal G}}

\newcommand\CH{{\mathcal H}}

\newcommand\BBC{{\mathbb C}}

\newcommand\BBG{{\mathbb G}}

\newcommand\BBQ{{\mathbb Q}}

\newcommand\BBZ{{\mathbb Z}}

\newcommand\id{{\mathrm {id}}}

\renewcommand\mod{{/\!\!/}}

\newcommand\G{{\operatorname{\mathsf G}}}
\newcommand\Gm{{{\BBG}_m}}

\newcommand\Ext{{\operatorname{Ext}}}

\newcommand\GL{\operatorname{GL}}

\newcommand\SL{\operatorname{SL}}
\newcommand\Sp{\operatorname{Sp}}
\newcommand\Spin{\operatorname{Spin}}
\newcommand\SOO{\operatorname{SO}}
\renewcommand\O{\operatorname{O}}

\newcommand\rk{\operatorname{rk}}

\usepackage{aliascnt}

\numberwithin{equation}{section}

\newaliascnt{theorem}{equation}
\newaliascnt{lemma}{equation}
\newaliascnt{corollary}{equation}
\newaliascnt{proposition}{equation}
\newaliascnt{definition}{equation}
\newaliascnt{remark}{equation}
\newaliascnt{example}{equation}

\newtheorem{theorem}[theorem]{Theorem}
\newtheorem{lemma}[lemma]{Lemma}

\theoremstyle{definition}
\newtheorem{definition}[definition]{Definition}
\newtheorem{remark}[remark]{Remark}

\aliascntresetthe{theorem}
\aliascntresetthe{lemma}
\aliascntresetthe{corollary}
\aliascntresetthe{proposition}
\aliascntresetthe{definition}
\aliascntresetthe{remark}
\aliascntresetthe{example}

\newcommand\ul[1]{\underline{#1}}

\usepackage{cleveref}

\subjclass[2020]{Primary 20G15, 14M27, 14M17; Secondary 14L30, 20G05}

\begin{document}

\title[Spherical subgroups in reductive algebraic groups]
{Spherical subgroups in reductive algebraic groups}

\dedicatory{Dedicated to Professor Jean-Pierre Serre on the occasion of his 100th birthday \\ with our deepest admiration and respect.}

\author[F. Knop]{Friedrich Knop}
\email{friedrich.knop@fau.de}
\address
{Department Mathematik,
FAU Erlangen-Nürnberg,
Cauerstraße 11,
91058 Erlangen, Germany}

\author[G. Röhrle]{Gerhard Röhrle}
\email{gerhard.roehrle@rub.de}
\address
{Fakultät für Mathematik,
Ruhr-Universität Bochum,
D-44780 Bochum, Germany}

\keywords{Spherical subgroup, symmetric subgroup, spherical homogeneous variety}

\allowdisplaybreaks

\begin{abstract}
	In \cite{KR}, we determined all spherical affine homogeneous varieties for simple algebraic groups in arbitrary characteristic. The present paper extends this classification to semisimple groups. This generalizes work done independently by Brion and Mikityuk in characteristic zero. Our primary approach relies on a lifting lemma to characteristic zero, enabling us to directly apply Brion's and Mikityuk's results.
\end{abstract}

\maketitle


\section{Introduction}

Let $G$ be a reductive algebraic group defined over an algebraically
closed field $k$ of characteristic $p\geqslant0$.  A closed subgroup $H$ of
$G$ is called \emph{spherical} if it has a dense orbit on the
flag variety $G/B$ of $G$.  Alternatively, $B$ acts on $G/H$ with an
open orbit.  Accordingly, a $G$-variety with this property is
also called \emph{spherical}.  If $H$ is spherical in $G$, then we
also speak of $(G,H)$ as a \emph{spherical pair}.
 
The purpose of this paper is to classify all connected reductive
spherical subgroups of reductive groups in arbitrary
characteristic, thereby generalizing the classifications of Brion
\cite{Bri87} and Mikityuk \cite[\S 5]{mik} to positive characteristic.

The classification of such subgroups in the case when $G$ is simple
was obtained by Krämer in \cite{Kr}.  In \cite{KR}, we extended
Krämer's classification to arbitrary positive characteristic.

The class of reductive spherical subgroups is of particular
importance. This is shown by the fact that Krämer's list permeates
much of the theory of spherical varieties in characteristic zero. In
particular, these kinds of subgroups provide many of the building
blocks for arbitrary spherical subgroups (see, e.g., Bravi--Pezzini
\cite{BrPezz}). We expect reductive spherical subgroups to play a
similar role for arbitrary $p$. For instance, the results from
\cite{KR} were already used in \cite{Knop5} to list all spherical
subgroups of rank $1$, which is crucial for the theory of general
spherical varieties.

For $p\ne2$, the class of reductive spherical subgroups includes all
symmetric subgroups, i.e., subgroups that are fixed point sets of an
involutive automorphism of $G$ (see e.g., Springer \cite{springer}).
Nevertheless, for $p=2$, symmetric subgroups are ill-behaved. Thus,
reductive spherical subgroups seem to be the correct replacement.

As a consequence of the Bruhat decomposition, any reductive group $G$
gives rise to a spherical pair -- this is one of the most fundamental examples
of a spherical subgroup (indeed, of a symmetric subgroup when
$p \ne 2$) in the non-simple case.  For example, let $G$ be a
reductive group and $B$ a Borel subgroup of $G$.  By the Bruhat
decomposition, the orbits of the diagonal action of $G$ on $G/B \times G/B$
are in bijective correspondence with the Weyl group of $G$.  In
particular, there is a dense $G$-orbit in $G/B \times G/B$, and
consequently, the diagonal subgroup in $G \times G$ is spherical in
$G \times G$.

Note that the requirement of having an open orbit in $G/B$ implies
that $H$ has, in fact, only finitely many orbits on $G/B$ (see, e.g.,
\cite{Knop3}). Therefore, our classification theorem can also be
viewed as a contribution to the program by Seitz \cite{Seitz} to
classify all pairs of subgroups $X,Y$ of a reductive group $G$ such
that there are only finitely many $(X,Y)$-double cosets in $G$ (see
also \cite{Br}).

The most important previous works are the aforementioned
classifications due to Mikityuk \cite[\S 5]{mik} and Brion
\cite{Bri87}.  Not only do we use their lists as a guideline, but, more
importantly, they enter crucially into our computations even in positive
characteristic.  This is because we extensively employ the technique
of reduction mod $p$ developed in \cite{KR}.  Here, we show that all
instances from the Brion--Mikityuk list descend to arbitrary positive
characteristic.

In \cite{KR}, we have already observed that for semisimple groups $G$
in positive characteristic, infinitely many ``new'' spherical subgroups
can arise in connection with a finite orbit module involving Frobenius
twists, cf.~\cite[Lem.\ 2.6]{GLMS}.  Specifically, consider the
irreducible representation $\Delta_q:\SL(2)\to\GL(4)$ of $\SL(2)$ with
highest weight $(q+1)\omega_1$ with $q=p^m>1$.  Since $\Delta_q$ is
self-dual, its image lies in $\SOO(4)$.

Note that for the purpose of classifying spherical subgroups, we may
replace $G$ with an isogenous group (using
\Cref{lem:isogeny}). Therefore, the simply connected Spin groups do
not appear in Table~\ref{table:sp-classical}, for
instance, but rather their isogenous counterparts do.

As the case when $G$ is simple is handled in \cite{KR}, in \Cref{thm:main} we restrict ourselves to the non-simple case. Thanks to \Cref{lem:ss}, we can reduce to the instance when $G$ is semisimple. For the concepts of an indecomposable spherical pair and for an isogeny between spherical pairs, see \Cref{def:pairs}.

The only novelty is that, up to
isogeny, there is essentially only one series (case (S9) in
\Cref{Tab:BMKR}) that is genuinely unique to positive
characteristic, namely in characteristic $2$.

\begin{theorem}
  \label{thm:main}
  Let $G$ be a connected semisimple algebraic group and let
  $H \subset G$ be a spherical connected reductive subgroup of
  $G$. Assume that $G$ is not simple and that the pair $(G,H)$ is
  indecomposable. Then $(G,H)$ is isogenous to one of the items of
  \Cref{Tab:BMKR}.
\end{theorem}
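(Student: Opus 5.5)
We will reduce the classification to the simple case of \cite{KR} together with a lifting argument to characteristic zero, where the lists of Brion \cite{Bri87} and Mikityuk \cite[\S 5]{mik} apply. By \Cref{lem:isogeny} we may replace $G$ by an isogenous group, so we assume $G=G_1\times\cdots\times G_n$ with each $G_i$ simple and $n\ge2$. For each $i$ let $H_i$ be the image of $H$ under the projection $G\to G_i$. Since $H$ is spherical in $G$, every $H_i$ is spherical in $G_i$, because a Borel of $G$ is a product of Borels of the factors. Moreover $H_i$ is connected and reductive. Hence each pair $(G_i,H_i)$ is either trivial ($H_i=G_i$) or appears in the list of \cite{KR}, with the Frobenius-twisted family $\Delta_q$ in characteristic $p$ included. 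This gives a finite menu of building blocks.

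\textbf{Step 1: the gluing data.} We decompose $H$ up to isogeny as $H=Z\cdot\prod_j L_j$, with $Z$ a central torus and each $L_j$ simple. Each simple factor $L_j$ maps to some subset $S_j\subseteq\{1,\dots,n\}$ of the $G_i$ nontrivially. Indecomposability means the bipartite graph linking factors $G_i$ to factors $L_j$, or to parts of the torus, is connected. Next we bound how many factors can be glued. A simple factor $L$ embedded diagonally into three or more $G_i$ cannot be spherical, by a dimension count: $\dim B_G>\dim H$ already for $L\hookrightarrow G_1\times G_2\times G_3$. If $L$ is shared by two factors, the complexity bound forces the situation to be the diagonal $G_1\times G_1\supset\Delta G_1$ or a few Brion--Mikityuk chains such as $\SL(n)\times\SL(n+1)$ or $\Sp(2n)\times\Sp(4)$-type gluing. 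We verify this by passing to $B$-orbits on $G/H$ through the parabolic/Levi reduction: $(G,H)$ is spherical if and only if a Levi of $H_{i}$-stabilizer acts spherically on the next factor, as in Brion's method.

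\textbf{Step 2: lifting.} This is the key reduction. For each candidate gluing pattern not involving Frobenius twists, we use the lifting lemma of \cite{KR}: $H\subset G$ lifts to a pair $(G_{\mathbb Z},H_{\mathbb Z})$ over $W(k)$, and sphericity in characteristic $p$ implies sphericity of the generic fiber. To justify this, note that the open $B$-orbit condition is an open condition on the fiber dimension of $B\times H\to G$, and upper semicontinuity transfers it to characteristic zero. The characteristic-zero pair must then be in the Brion--Mikityuk list, so $(G,H)$ is a reduction mod $p$ of an item of that list. Conversely, we check that every Brion--Mikityuk item stays spherical mod $p$. For this we exhibit a point with finite-modulo-center stabilizer of the correct dimension, using the same generic-stabilizer computations as in the characteristic-zero proof, done over $\mathbb Z$.

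\textbf{Main obstacle.} The obstacle is the factors that do not lift: those involving $\Delta_q$ or the special characteristic-$2$ spherical subgroups of simple groups. We expect these to produce the new series (S9). Here we would argue directly. When $H_i\subset G_i$ is the image of $\Delta_q$ in $\SOO(4)$, or a characteristic-$2$ exotic subgroup, we compute the generic stabilizer $H_x$ of $H_i$ on $G_i/B_i$. Gluing with another factor is then spherical only if $H_x$ (its Levi) is spherical on that factor, and we check this case by case. We expect only one surviving chain, in characteristic $2$, which yields (S9). All remaining Frobenius-twisted gluings should fail by a dimension count, since twisting disconnects the gluing and forces decomposability.
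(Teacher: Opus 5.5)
Your proposal has genuine gaps, and two of its concrete claims are false.

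\textbf{False claim 1: triple diagonals.} In Step 1 you assert that a simple factor of $H$ glued diagonally into three or more simple factors of $G$ can never give a spherical pair. Item (S8) of \Cref{Tab:BMKR} refutes this: there $\Sp(2)$ sits diagonally in $\Sp(2l+2)\times\Sp(2m+2)\times\Sp(2n+2)$. Already for $l=m=n=0$ the pair $(\SL(2)^3,\Delta\SL(2))$ has $\dim H=3=\dim G/B$ and is spherical; it lifts, and in characteristic zero Clebsch--Gordan makes $\BBC[G/H]$ multiplicity free. The inequality you invoke, $\dim B_G>\dim H$, is no obstruction at all. The necessary condition is $\dim H\geqslant\dim G/B$ from \Cref{lemma3}. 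Your Step 1 would therefore discard (S8).

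\textbf{False claim 2: Frobenius twists.} Frobenius-twisted gluings do not fail. The subgroup $\{(g,F(g))\}\subset\SL(2)\times\SL(2)$ is the image of $\Delta\SL(2)$ under the isogeny $(\id,F)$. It is connected, indecomposable, of the same dimension as the untwisted diagonal, and spherical by \Cref{lem:isogeny}. Such twists have to be absorbed by an isogeny of $G$, as the paper does with $\tau=(F_{G_1}^{a_2},F_{G_2}^{a_1})$. This is exactly why the theorem is stated up to isogeny.

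\textbf{Missing step: which $H$ lift.} You never establish which $H$ actually lift. Knowing that each projection $(G_i,H_i)$ appears in \cite{KR} does not make $H\subset G$ liftable. The block (C18) does not lift at all. In characteristic $2$, gluings through non-central (special) isogenies such as $\Sp(4)\to\SOO(5)$ are not Frobenius twists, yet they can obstruct a simultaneous lift. For the non-liftable remainder you only say you ``expect'' a single surviving chain, which is not an argument.

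\textbf{The paper's mechanism.} The paper supplies this missing step by an induction. Every connected reductive spherical $H$ is reached from $G$ by a chain of maximal connected reductive subgroups, each spherical by \Cref{lemma:weak}. \Cref{lem:1} then shows that one step from a known spherical pair yields either a known spherical pair or a non-spherical one. By the trichotomy (a)--(c) for maximal subgroups, each step either lifts as a triple $(G,H,H')$, so that \Cref{lem:2} applies, or belongs to a short explicit list of non-liftable steps. Those are decided by \Cref{lemma:hard} using the tabulated $L(G/H)$, together with \Cref{lem:3} and \Cref{lem:p21}.

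\textbf{Your projection idea as an alternative.} Your approach could become a genuine alternative in the style of Brion and Mikityuk: apply \Cref{lemma:hard} to $H\subseteq\prod_iH_i$ with $L=\prod_iL(G_i/H_i)$. That would require computing $L(G_i/H_i)$ for every entry of the tables in \cite{KR}, including the characteristic-$2$ ones, and then classifying all admissible gluings. None of this is carried out in your proposal.
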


Here, ``isogeny'' does not only mean only central isogenies but also outer automorphisms, Frobenius morphisms, and exceptional
isogenies. Taking this into account, in positive characteristic a
given group $G$ might contain an infinitude of spherical connected
reductive subgroups.


\textbf{Acknowledgments:} Work on this paper began during a visit to the Mathematisches Forschungsinstitut Oberwolfach (MFO) under the ``Oberwolfach Research Fellows'' program; we thank the MFO for its support.

\section{Preliminaries}
\label{sec:basics}

\subsection{Notation}
Throughout, $G$ is a connected reductive algebraic group defined over
an algebraically closed field $k$ of characteristic $p\geqslant  0$. Let $B$ be a
Borel subgroup of $G$. By $\rk G$ we denote the rank of $G$.

Unless stated otherwise, all subgroups are assumed to be closed
and reduced.  Let $H\subseteq G$ be a subgroup. Then $H^0$ denotes the identity component of $H$. Moreover, $\Delta H \subset H \times H$ is the diagonal embedding of $H$ into $H \times H$. If $H$ acts
on the variety $X$, we write  $Hx$ for the $H$-orbit of $x\in X$  and $C_H(x)$ for its (reduced) stabilizer in $H$.

A homomorphism $\phi:H_1\to H_2$ between connected groups is an
\emph{isogeny} if it is surjective with finite kernel. We say that
$H_1$ and $H_2$ are isogenous and denote this by $H_1 \sim H_2$, if
there is a connected group $H_0$ and isogenies
$H_1\leftarrow H_0\to H_2$. More generally:

\begin{definition}
  \label{def:pairs}
  Let $G$ and $H$ be connected and reductive.
  \begin{itemize}
  	\item [(i)] A \emph{homogeneous pair} $(G,H;\rho)$ is given by a homomorphism $\rho:H\to G$ such that $H\to\rho(H)$ is an isogeny. We usually suppress $\rho$ if no
  	confusion can arise. The homogeneous variety $G/\rho(H)$ is also
  	denoted by $G/H$. The pair $(G,H)$ is \emph{trivial} if
  	$\rho(H)=G$.
  	\item [(ii)] An \emph{isogeny between two homogeneous pairs}
  	$(G_1,H_1;\rho_1)$ and $(G_2,H_2;\rho_2)$ is an isogeny
  	$\phi:G_1\to G_2$ which maps $\rho_1(H_1)$ onto $\rho_2(H_2)$. This
  	induces a surjective finite morphism $G_1/H_1\to G_2/H_2$. Two
  	homogeneous pairs are said to be \emph{isogenous} if there is a
  	homogeneous pair $(G_0,H_0)$ and isogenies
  	$(G_1,H_1)\leftarrow(G_0,H_0)\to(G_2,H_2)$.
  	\item [(iii)] A pair $(G,H)$ is \emph{decomposable} if it is isogenous
  	to a product pair $(G_1\times G_2,H_1\times H_2)$ where both pairs
  	$(G_1,H_1)$ and $(G_2,H_2)$ are non-trivial. A pair which is neither
  	trivial nor decomposable is called \emph{indecomposable}.
  \end{itemize}
\end{definition}

\begin{remark}
  The advantage of this notion is that it allows us to assume that both
  $G$ and $H$ are the Cartesian product of a torus and simple factors.
\end{remark}

\subsection{On some subgroups of simple groups}

The following auxiliary results are used in the proof of \Cref{lem:3}
below.

\begin{lemma}
  \label{lem:diag1}
  Let $G$ be simple and let $H \subsetneq G$ be a proper subgroup of
  maximal dimension. Then $H$ is a parabolic subgroup of $G$.
\end{lemma}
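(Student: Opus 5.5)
Let $H\subsetneq G$ be a proper closed subgroup with $\dim H$ maximal among proper subgroups. The plan is to compare $H$ with a maximal parabolic subgroup. Let $P_{\max}$ denote a maximal proper parabolic subgroup of largest dimension. Then $\dim H\ge \dim P_{\max}$, so $\dim G/H \le \dim G/P_{\max}$, and this number is small. The goal is to show that $H$ is contained in some parabolic subgroup $P$. Once that is known, maximality of $\dim H$ together with $P\neq G$ gives $\dim H=\dim P$, and since $H\subseteq P$ with $P$ connected, $H^0=P$. Then $H\subseteq N_G(P)=P$, so $H=P$.

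\textbf{Reduction to the reductive case.} By the Borel--Tits theorem, if the unipotent radical $R_u(H^0)$ is nontrivial, then $H$ lies in a proper parabolic subgroup $P$; the normalizer of $R_u(H^0)$ is contained in such a $P$. By the argument above we are then done. So it remains to exclude the case where $H^0$ is reductive. Note that $H^0$ cannot be trivial or a torus, since $\dim H\ge \dim B>\rk G$.

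\textbf{Excluding reductive $H^0$.} I would use two facts.
\begin{enumerate}[(a)]
\item For reductive $H^0$, $\dim G/H^0\ge \dim G/B$. I would try to obtain this by letting a Borel subgroup $B_H$ of $H^0$ act on $G/B$: it has a fixed point, so $H^0$ has a closed orbit $H^0/(H^0\cap B)$. A more robust alternative is a count: $\dim H^0\le \dim G-2|\Phi^+_G|+2|\Phi^+_H|$ fails in general, so I would instead compare the number of roots. If $T_H\subseteq T$, the roots of $H^0$ form a subsystem of size $|\Phi_H|$, with $|\Phi_H|\le |\Phi_G|$ and equality only if $H^0=G$.
\item Simply bound $\dim H^0=\rk H+|\Phi_H|\le \rk G+|\Phi_H|$ and show $|\Phi_H|<|\Phi^+_G|+|\Phi^+_G\setminus\Phi^+_{L}|$, where $L$ is the Levi factor of $P_{\max}$. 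Equivalently, I need $\dim H^0<\dim P_{\max}=\dim G-\dim G/P_{\max}$, that is, $\dim G/H^0>\dim G/P_{\max}$.
\end{enumerate}

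\textbf{Expected main obstacle.} Proving $\dim G/H^0>\dim G/P_{\max}$ for every proper reductive $H^0$ is the hard step. A general argument would go as follows. Since $H^0$ is reductive, $G/H^0$ is affine. If $\dim G/H^0\le \dim G/P_{\max}$, then $H^0$ acts on $G/P_{\max}$ with a fixed point of its Borel subgroup, and the orbit of that point has dimension at most $\dim G/P_{\max}$. I would then invoke the inequality $\dim G/H\ge \rk(G)-\rk(H)+\dim(\text{Borel-orbit data})$, or simply finish by a case-by-case check over simple types. In that check, $\dim G/P_{\max}$ is $n$ for $A_n$, $2n-1$ for $B_n$ and $C_n$, and so on, and the maximal-dimensional reductive subgroups are known: $\GL_n\subset\SL_{n+1}$, $\SOO_{2n}\subset\SOO_{2n+1}$, $\Sp_{2n-2}\times\Sp_2$, $\mathrm{Spin}_9\subset F_4$, $D_6A_1\subset E_7$, and so on. One verifies $\dim G/H^0>\dim G/P_{\max}$ in each case; for example, $\dim\SL_{n+1}/\GL_n=2n>n$. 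This finite verification, including small characteristic, is what I would carry out if no uniform argument works.
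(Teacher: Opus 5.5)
\textbf{There is a genuine gap: the reductive case, which is the whole content of the lemma, is not proved.} Your treatment of the non-reductive case is fine. $H$ normalizes $R_u(H^0)$, so Borel--Tits places $H$ in a proper parabolic $P$, and maximality of $\dim H$ gives $H^0=P$ and $H\subseteq N_G(P)=P$. But that is the easy half. What must be shown is that no proper subgroup with reductive identity component has dimension $\geqslant\dim P_{\max}$, and none of your proposed tools establishes this:
\begin{itemize}
\item Claim (a) is false. For example, $\dim\SL(5)/\GL(4)=8<10=\dim\SL(5)/B$. A $B_H$-fixed point on $G/B$ only gives a closed $H^0$-orbit, not a dimension bound.
\item The root count you say ``fails in general'' in fact always holds. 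But it only gives $\dim H^0\leqslant\rk G+|\Phi_H|$, which says nothing without a bound on $|\Phi_H|$.
\item In (b), one has $\dim P_{\max}-\rk G=|\Phi^+_G|+|\Phi^+_L|$, not $|\Phi^+_G|+|\Phi^+_G\setminus\Phi^+_L|$.
\item The inequality involving ``Borel-orbit data'' is not an actual statement.
\end{itemize}
The case-by-case fallback presupposes knowing the largest reductive subgroups of every simple group in every characteristic. That is essentially the Seitz/Liebeck--Seitz classification of maximal connected subgroups, and your list is already wrong in two places. In $E_7$ the Levi subgroup $E_6T_1$ (dimension $79$) is larger than $D_6A_1$ (dimension $69$). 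For $p=2$, the subgroup $\SOO(2n)\subset\Sp(2n)$ is larger than $\Sp(2n-2)\times\Sp(2)$ for $n\geqslant 3$; it has codimension $2n$, beating $\dim\Sp(2n)/P_1=2n-1$ by exactly one. So this verification is neither routine nor self-contained.

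\textbf{The missing idea is the uniform orbit-closure argument used in the paper.} By Chevalley's theorem, choose a $G$-module $\tilde V$ and $y\in\mathbb{P}(\tilde V)$ with $C_G(y)=H$, and pass to $V=\tilde V/\tilde V^G$. Since $\Ext^1_G(k,k)=0$ and $G$ has no nontrivial characters, $\mathbb{P}(V)^G=\leer$, while the image of $y$ is still $H$-fixed. Now every point of $\mathbb{P}(V)$ has a \emph{proper} stabilizer. Maximality of $\dim H$ therefore forces $\dim Gy=\dim G/H$. It also forces $Gy$ to be closed: a point $z\in\overline{Gy}\setminus Gy$ would satisfy $\dim Gz<\dim Gy$, hence $\dim C_G(z)>\dim H$. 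Thus $G/C_G(y)$ is complete, so $C_G(y)$ is parabolic, hence connected, and $H$, being of finite index in it, equals it. This needs neither Borel--Tits nor any classification.

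Within your own setup this would also have settled the reductive case at once. You noted that $G/H^0$ is affine. Then $G/C_G(y)$, whose stabilizer has identity component $H^0$, is affine of positive dimension, hence not complete. So $Gy$ is not closed, and the boundary argument gives the contradiction.
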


\begin{proof}
  We claim that there is a finite-dimensional $G$-module $V$ such that
  $\mathbb{P}(V)^G = \leer$ and $\mathbb{P}(V)^H \ne \leer$.  Thanks
  to Chevalley's Theorem, see \cite[Ch.~II, Thm.~5.1]{borel}, there is
  a finite-dimensional $G$-module $\tilde V$ and
  $y \in \mathbb{P}(\tilde V)$ such that $C_G(y) = H$.  Let
  $V: = \tilde{V}/\tilde{V}^G$.  Since $\Ext_G^1(k,k) = 0$, e.g.,
  see \cite[(2.12(1))]{jantzen}, we have $V^G = 0$. But then
  $\mathbb{P}(V)^G = \leer$, since the character group of $G$ is
  trivial. Now observe that $y \notin \mathbb{P}(\tilde V^G)$, since
  $C_G(y) = H \subsetneq G$. Therefore, the image of $y$ in
  $\mathbb{P}(V)$ is an $H$-fixed point, proving the claim.
	
  Now let $V$ be as in the claim and choose $y\in \mathbb{P}(V)^H$.
  Then the orbit map induces a map
  $\phi : X : = G/H \to \mathbb{P}(V)$ mapping $x := 1H\in X$ to $y$.
  If $\dim \phi (X) < \dim X$, then $\dim C_G(y) > \dim H$,
  contradicting the maximality of $\dim H$.  Otherwise,
  $\dim \phi (X) = \dim X$. In this case, $C_G(y)$ contains
  $H = C_G(x)$ as a subgroup of finite index. Suppose $H$ is not
  parabolic in $G$. Then $C_G(y)$ is not parabolic either. Hence
  $\phi (X)$ is not complete and therefore not closed in
  $\mathbb{P}(V)$.  Pick
  $z\in Z : = \overline {\phi(X)} \setminus \phi(X)$ in
  $\mathbb{P}(V)$.  Then
  $\dim Gz \leqslant \dim Z < \dim \phi(X) = \dim Gx$. Therefore,
  $\dim C_G(z) > \dim C_G(x) = \dim H$, again contradicting the
  maximality of $\dim H$. Thus $H$ is parabolic after all.
\end{proof}

\begin{lemma}
  \label{lem:4}
  Let $G$ be simple and let $H\subsetneq G$ be a proper subgroup. Then
  $\dim G/H \geqslant \rk G$.
\end{lemma}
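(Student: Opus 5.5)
By \Cref{lem:diag1}, it suffices to prove the inequality for a proper subgroup $H$ of maximal dimension, since every proper subgroup has dimension at most that of such an $H$. The plan is to take $H$ to be a proper subgroup of maximal dimension, which \Cref{lem:diag1} tells us is a parabolic subgroup $P\subsetneq G$. Then $\dim G/H\geqslant\dim G/P$ for every proper subgroup $H$, and it remains to bound $\dim G/P$ from below for proper parabolics.

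For the parabolic, we conjugate so that $P\supseteq B$. Then $\dim G/P$ equals the number of positive roots not lying in the root subsystem $\Phi_L$ of the Levi factor $L$ of $P$. Since $P\ne G$, some simple root $\alpha$ is omitted from the simple roots of $L$, so $P$ is contained in the maximal parabolic $P_\alpha$ and $\dim G/P\geqslant\dim G/P_\alpha$. Thus we reduce to showing that, for each simple root $\alpha$, the number of positive roots whose expansion in simple roots has positive $\alpha$-coefficient is at least $\rk G=:n$.

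This is the one combinatorial step, and I would prove it by exhibiting $n$ distinct such roots. Because $G$ is simple, the Dynkin diagram is connected. For each simple root $\beta$, let $\alpha=\gamma_0,\gamma_1,\dots,\gamma_r=\beta$ be the unique path in the (tree-shaped) Dynkin diagram from $\alpha$ to $\beta$. The sum $\gamma_0+\dots+\gamma_r$ is a positive root, since a sum of simple roots along a connected path is a root: adding adjacent simple roots one at a time keeps us inside the roots, as $\langle\cdot,\gamma_{i+1}^\vee\rangle<0$. This root has $\alpha$-coefficient $1$, and the roots obtained for distinct $\beta$ are distinct, since their supports differ (the support determines the endpoint $\beta$ as the vertex of the path farthest from $\alpha$). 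This gives $n$ positive roots outside $\Phi_L$, so $\dim G/P_\alpha\geqslant n$.

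I expect no serious obstacle. The only points needing care are the justification that path sums are roots (a standard fact, valid in every characteristic since the root system is characteristic-free) and confirming that \Cref{lem:diag1} applies to arbitrary, possibly nonreduced or disconnected, proper subgroups. Since subgroups are reduced by convention, and $H^0$ has the same dimension as $H$, both points are fine.
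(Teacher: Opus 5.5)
Your proof is correct, but it takes a genuinely different route from the paper.

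The paper argues directly with a maximal torus $T$. By rigidity of tori, the stabilizer $S=C_T(x)$ of a generic point $x\in G/H$ does not depend on $x$. So $S$ fixes a dense subset and hence acts trivially on $G/H$. It therefore lies in the kernel of the $G$-action, which is a normal subgroup contained in $H\subsetneq G$. Simplicity of $G$ then forces $S^0=1$, so a generic $T$-orbit already has dimension $\rk G$.

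You instead use \Cref{lem:diag1} to reduce to a maximal parabolic $P_\alpha$, and then count positive roots with positive $\alpha$-coefficient via path sums in the Dynkin diagram. This is legitimate: the proof of \Cref{lem:diag1} does not use \Cref{lem:4}, so there is no circularity. Your combinatorial step is also sound. In fact $\langle\gamma_0+\dots+\gamma_i,\gamma_{i+1}^\vee\rangle<0$ holds even without the tree property, since distinct simple roots pair non-positively.

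The two approaches buy different things.
\begin{itemize}
\item The paper's argument is shorter and avoids the comparatively heavy input of \Cref{lem:diag1}, namely Chevalley's theorem and the boundary-orbit argument. It also gives the slightly stronger statement that $T$ acts on $G/H$ with finite generic stabilizers.
\item Your argument yields the sharper bound $\dim G/H\geqslant\min_\alpha\dim G/P_\alpha$ for every proper $H$, so it identifies the exact minimum of $\dim G/H$. For instance, it shows that the bound $\rk G$ is attained only in type $A$, by $\mathbb{P}^n=\SL(n+1)/P_1$.
\end{itemize}
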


\begin{proof}
  Let $T \subseteq G$ be a maximal torus.  By the rigidity of tori,
  there exists a subgroup $S$ of $T$ such that $S = C_T(x)$ for a
  generic $x \in G/H$. Then $S$ acts trivially on $G/H$ and so $S$
  lies in the kernel of the action of $G$ on $G/H$. Since $G$ is
  simple and $H \subsetneq G$, it follows that $S^0 =
  1$. Consequently, there is an $x$ in $G/H$ such that
  $\dim G/H \geqslant \dim Tx = \dim T = \rk G$, and the result follows.
\end{proof}

\subsection{Reduction results for spherical subgroups}

The first well-known observation, which is immediate from the
definition, allows us to reduce to the case when $H$ is connected.

\begin{lemma}
  \label{lemma0}
  Let $H\subseteq G$ be a subgroup of $G$.  Then $H$ is spherical in
  $G$ if and only if the identity component $H^0$ is spherical.
\end{lemma}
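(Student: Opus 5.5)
Both directions reduce to comparing orbit dimensions, using that $H^0$ has finite index in $H$. The key fact is that $H$ is an algebraic group with finitely many connected components. Hence $H = \bigcup_{i=1}^n h_i H^0$ for suitable $h_1,\dots,h_n \in H$.

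\emph{If $H^0$ is spherical, then $H$ is spherical.} Suppose $H^0$ has a dense orbit $H^0 y$ in $G/B$. Since $H^0 \subseteq H$, we have $H^0 y \subseteq Hy$. Therefore $Hy$ is dense as well.

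\emph{If $H$ is spherical, then $H^0$ is spherical.} Suppose $H$ has a dense orbit $Hy \subseteq G/B$. Orbits are locally closed, so $Hy$ is open in the irreducible variety $G/B$, and in particular $\dim Hy = \dim G/B$. Now write
\[
Hy = \bigcup_{i=1}^n h_i H^0 y .
\]
Each translate $h_i H^0 y$ has dimension $\dim H^0 y$. A finite union of sets of dimension $\dim H^0 y$ has dimension $\dim H^0 y$, so $\dim H^0 y = \dim Hy = \dim G/B$. Because $H^0 y$ is irreducible and $G/B$ is irreducible of the same dimension, the closure of $H^0 y$ equals $G/B$. Thus $H^0$ has a dense orbit, i.e., $H^0$ is spherical.

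Alternatively, one can work on $G/H$ with $B$-orbits instead of $G/B$. The finite morphism $G/H^0 \to G/H$ is $B$-equivariant and surjective. Hence $B$ has an open orbit on $G/H^0$ if and only if $B$ has an open orbit on $G/H$, since the preimage of the open $B$-orbit in $G/H$ is a finite union of $B$-orbits of full dimension.

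The only point needing care is that dimension is preserved under finite unions and finite covers. This is standard, so I expect no real obstacle.
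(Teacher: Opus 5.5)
Your proof is correct: the finite-index decomposition $Hy=\bigcup_i h_iH^0y$ gives $\dim H^0y=\dim Hy=\dim G/B$, and irreducibility then yields density. The paper gives no proof, calling the statement immediate from the definition, and your argument (including the alternative via the finite $B$-equivariant map $G/H^0\to G/H$) is the standard justification it has in mind.
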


The next observation allows us to reduce to the case when $G$ is
semisimple, cf.~\cite[\S 4]{Kr}, which we assume henceforth.

\begin{lemma}
  \label{lem:ss}
  Let $G$ be reductive and let $Z$ be its connected center. Then $H$
  is spherical in $G$ if and only if $ZH/Z$ is spherical in $G/Z$.
\end{lemma}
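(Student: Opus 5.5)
We prove \Cref{lem:ss} by analysing the Borel subgroups of $G$ and of $G/Z$ directly. Set $\bar G := G/Z$ and let $\pi : G \to \bar G$ be the quotient map. Since $Z$ is a central torus, it lies in every maximal torus of $G$, hence in every Borel subgroup $B$. The image $\bar B := \pi(B) = B/Z$ is a Borel subgroup of $\bar G$, and every Borel subgroup of $\bar G$ arises in this way. The plan is to identify the relevant flag varieties and orbits so that the two sphericity conditions become literally the same statement.

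First I will show that $\pi$ induces an isomorphism of flag varieties $G/B \to \bar G/\bar B$. This holds because $\pi^{-1}(\bar B) = BZ = B$, so the induced bijective $G$-equivariant morphism $G/B \to \bar G/\bar B$ is a homogeneous map with trivial fibres. One can also see it directly: $\bar G/\bar B = (G/Z)/(B/Z) \cong G/B$ as homogeneous spaces. In any case the identification is at least a bijective morphism, which is all we need for statements about dense orbits.

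Next I compare the orbits. The group $H$ acts on $G/B$, and $ZH$ acts in the same way, because $Z \subseteq B$ and $Z$ is central, so $zgB = gzB = gB$ and $Z$ acts trivially on $G/B$. Thus the $H$-orbits and the $ZH$-orbits on $G/B$ coincide. Moreover, the action of $ZH$ on $G/B$ factors through $ZH/Z = \pi(H) \subseteq \bar G$, and under the identification $G/B \cong \bar G/\bar B$ this is exactly the action of $ZH/Z$ on $\bar G/\bar B$. Hence $H$ has a dense orbit on $G/B$ if and only if $ZH/Z$ has a dense orbit on $\bar G/\bar B$.

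I do not expect any serious obstacle here. The only point needing care is the claim that $Z \subseteq B$, which holds because a connected central torus lies in every maximal torus. A second small point is that $ZH$ is closed, since it is the image of the multiplication map $Z \times H \to G$, a product of a closed normal subgroup with a closed subgroup; consequently $ZH/Z = \pi(H)$ is a closed subgroup of $\bar G$. Alternatively, the same argument can be run with $B$-orbits on $G/H$ versus $G/ZH$, using $\dim$ counts, but the flag-variety version avoids any separability issues.
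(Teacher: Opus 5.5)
Your proof is correct and complete. The paper gives no argument of its own here: it states the lemma as an immediate observation with a pointer to Krämer \cite[\S 4]{Kr}, whose work is in characteristic zero. Your direct verification supplies exactly what that citation stands in for: $Z$ lies in every Borel subgroup, hence acts trivially on $G/B$, and $f\colon G/B\to\bar G/\bar B$ is a bijective morphism intertwining the actions of $H$ and $\pi(H)=ZH/Z$. It also has the advantage of being visibly independent of the characteristic.

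The one step worth writing out is the converse direction of the orbit correspondence. If $\pi(H)f(x)$ is dense, it is open. Its preimage is exactly $Hx$, by injectivity and equivariance of $f$, so $Hx$ is nonempty and open, hence dense in $G/B$. This is why bijectivity suffices and no separability of $f$ is needed.
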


The next lemma states that the set of spherical subgroups of $G$
depends only on its isogeny class.

\begin{lemma}
  \label{lem:isogeny}
  Let $\phi:G_1\to G_2$ be an isogeny between connected reductive
  groups.  Then $\phi$ induces a bijection between the sets of
  (conjugacy classes of) connected (reductive) spherical subgroups of
  $G_1$ and $G_2$.
\end{lemma}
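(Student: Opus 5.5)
Since $\phi$ is an isogeny, its kernel $N$ is a finite central normal subgroup scheme; it may be non-reduced, as for Frobenius or exceptional isogenies. My plan is to define the correspondence on connected subgroups by $H_1\mapsto \phi(H_1)$, with inverse $H_2\mapsto (\phi^{-1}(H_2)_{\mathrm{red}})^0$, the identity component of the reduced preimage. Connectedness is preserved by both maps. Reductivity is preserved because $\phi|_{H_1}:H_1\to\phi(H_1)$ is surjective with finite kernel. Indeed, the unipotent radical of $\phi(H_1)$ pulls back to a connected normal subgroup whose image is unipotent; the unipotent part of that preimage therefore has finite index, so it is trivial when $H_1$ is reductive. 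Conversely, $R_u(H_1)$ maps to a connected normal unipotent subgroup of $\phi(H_1)$. Both maps are also compatible with conjugation, since $\phi$ is surjective on $k$-points.

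The first step is to check that the two maps are mutually inverse. Clearly $\phi((\phi^{-1}(H_2)_{\mathrm{red}})^0)=H_2$, because $\phi^{-1}(H_2)_{\mathrm{red}}\to H_2$ is surjective with finite fibres and $H_2$ is connected. In the other direction, $\phi^{-1}(\phi(H_1))_{\mathrm{red}}$ equals $H_1\cdot N(k)$ on $k$-points, and $N(k)$ is finite and central. Its identity component is therefore $H_1$.

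The second step is sphericity. Borel subgroups correspond in the same way: $\phi(B_1)$ is a Borel subgroup of $G_2$, and $(\phi^{-1}(B_2)_{\mathrm{red}})^0$ is a Borel subgroup of $G_1$. The induced morphism $G_1/B_1\to G_2/\phi(B_1)$ is a $G_1$-equivariant, bijective, finite morphism. It is a universal homeomorphism, purely inseparable in the Frobenius case. Hence $H_1$ has a dense orbit on $G_1/B_1$ if and only if $\phi(H_1)$ has a dense orbit on $G_2/B_2$. The point is that the $H_1$-orbits map onto the $\phi(H_1)$-orbits, and the image of an orbit is dense exactly when the orbit is, since dimensions are preserved under finite surjective maps. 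Alternatively, one can invoke Lemma~\ref{lemma0} after passing to the possibly disconnected preimage.

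The main obstacle is handling non-reduced kernels: subgroups must be reduced, and images of closed subgroups must be closed. Images of closed subgroups are closed because group homomorphisms have closed image. To deal with the kernel, I would take reduced preimages, work only with $k$-points, and use that a bijective morphism between varieties preserves dimensions and density. With these precautions, no inseparability issue affects the orbit argument.
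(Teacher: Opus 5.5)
\textbf{Verdict: your proof is correct.} The paper states this lemma without proof, as a standard fact alongside Lemmas~\ref{lemma0} and~\ref{lem:ss}, so there is no argument of the authors to compare with. Your route is the natural one: the correspondence $H_1\mapsto\phi(H_1)$ with inverse $H_2\mapsto(\phi^{-1}(H_2)_{\mathrm{red}})^0$, plus comparison of orbit dimensions along the finite surjective map $G_1/B_1\to G_2/\phi(B_1)$.

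Some points of precision:
\begin{itemize}
\item The scheme-theoretic kernel of an isogeny need not be central. The Frobenius kernel, for example, is normal but not central, since $G_1$ acts on its Lie algebra by the adjoint representation. Your argument never uses this claim, though. You only use that the finite group $N(k)$ is central, which is true because a connected group centralizes any finite normal subgroup.
\item Centrality of $N(k)$ is also what makes $G_1/B_1\to G_2/\phi(B_1)$ injective, since $N(k)\subseteq Z(G_1)\subseteq B_1$. You assert bijectivity without saying this. For the density argument, finiteness and surjectivity already suffice, because an orbit is dense exactly when its dimension equals that of the flag variety.
\item The reductivity step is loosely phrased (``the unipotent part of that preimage has finite index''). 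It is cleaner to cite the standard fact $\phi(R_u(H_1))=R_u(\phi(H_1))$ for surjective homomorphisms of connected groups. If $\phi(H_1)$ is reductive, then $R_u(H_1)$ is a connected subgroup of the finite kernel and hence trivial. Alternatively, the connected preimage of $R_u(\phi(H_1))$ has all its semisimple elements in the finite kernel, so its maximal torus is trivial and it is unipotent.
\end{itemize}
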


\begin{definition}
	\label{def:sphpair}
	A homogeneous pair $(G,H)$ is called \emph{spherical} if $G/H$ is	spherical. 
\end{definition}

The lemma implies that this property is
isogeny-invariant. It is clear that a product of homogeneous pairs is
spherical if and only if each factor is spherical. Thus, we may
restrict our attention to indecomposable spherical pairs.

\subsection{Deformation of spherical subgroups}
\label{subsec:deformation}
In this subsection, we recall some results from \cite[Thm.\ 3.4]{KR}
which enables us to compare spherical subgroups in positive
characteristic to those in characteristic zero. This approach reduces
most of the classification work to the results of Brion--Mikityuk.

Let $\CG$ be a split reductive group scheme over $\BBZ$ (this entails
connected geometric fibers), e.g., see \cite[Exp.~I,
4.2]{demazurgrothendieck:schemasengroupes}, and $\CH\subseteq\CG$ a
closed flat subgroup scheme. Then for any algebraically closed field
$k$, we obtain the geometric fibers $H_k\subseteq G_k$ by base change
and the homogeneous variety $X_k=G_k/H_k$. We say that a homogeneous pair
$(G,H)$ defined over $k$ \emph{lifts to characteristic zero} if it is
isogenous to $X_k$ for some pair $(\CG,\CH)$. In this case, the lift
is $X_\BBC$, where $\BBC$ is the field of complex numbers. If
$k=\BBC$, we also say that $X_\BBC$ is defined over $\BBZ$.  Now it
follows from \cite[Thm.\ 3.4]{KR}:

\begin{lemma}
  \label{Theorem1}
  Assume that the homogeneous $G$-variety $X$ defined over $k$
  lifts to characteristic zero. Then $X$ is spherical if and only if its
  lift $X_\BBC$ is spherical.
\end{lemma}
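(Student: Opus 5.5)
The plan is to deduce the statement from \cite[Thm.\ 3.4]{KR} by checking that its hypotheses hold in our setting. By definition, $X$ lifts to characteristic zero: there are a split reductive group scheme $\CG$ over $\BBZ$, a closed flat subgroup scheme $\CH\subseteq\CG$, and an isogeny between the pair $(G,H)$ and the geometric fiber $(G_k,H_k)$. The first step removes the isogeny. By \Cref{def:pairs}(ii), an isogeny of homogeneous pairs induces a finite surjective $G$-equivariant morphism $G_1/H_1\to G_2/H_2$. A finite surjective equivariant morphism preserves the existence of an open $B$-orbit in both directions: open orbits map to open orbits, and the preimage of an open orbit is a finite union of orbits of the same dimension, one of which is open. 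Together with \Cref{lem:isogeny}, this shows that sphericity is isogeny invariant, so we may replace $X$ by $X_k=G_k/H_k$.

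The main step is to compare the fibers of $\CX=\CG/\CH$ over $\operatorname{Spec}\BBZ$. Sphericity of $X_K$ for an algebraically closed field $K$ is equivalent to the following: for a Borel subscheme $\mathcal B\subseteq\CG$, which exists because $\CG$ is split, some orbit map $\mathcal B\to X_K$ is dominant. Equivalently, the generic dimension of the $B$-stabilizers equals $\dim B-\dim X$. Since $\CH$ is flat, $\dim X_K=\dim\CG-\dim\CH$ is constant in $K$. Upper semicontinuity of stabilizer dimension on the action groupoid $\mathcal B\times\CX\to\CX$ then lets us transfer sphericity between the geometric fibers over $\BBQ$ and over $\mathbb F_p$. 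Independence of the choice of algebraically closed field within a fixed characteristic is standard, which takes care of passing between $\BBC$ and $\overline{\BBQ}$, and between $k$ and $\overline{\mathbb F_p}$.

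The main obstacle is the direction ``$X_\BBC$ spherical $\Rightarrow$ $X_k$ spherical.'' Semicontinuity naturally gives the opposite direction: an open orbit in the special fiber spreads to the generic fiber. To go the other way, one has to rule out the $B$-complexity jumping up under specialization. For this I would follow \cite{KR} and use a characteristic-free invariant such as complexity, or the rank together with the weight monoid. A natural choice is the $\BBZ$-module $\BBZ[\CX]^{(\mathcal B)}$ of $\mathcal B$-semi-invariants, combined with flatness and good filtrations to control the multiplicities in $k[X_k]$. Since this is exactly the content of \cite[Thm.\ 3.4]{KR}, in the write-up I would invoke that theorem for the pair $(\CG,\CH)$ and note that only the isogeny reduction above is needed in addition.
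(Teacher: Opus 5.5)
Your proposal is correct and follows the paper, which simply derives the lemma from \cite[Thm.\ 3.4]{KR}. The only thing you add is the isogeny reduction via the finite surjective equivariant morphism and \Cref{lem:isogeny}, which lets you replace $X$ by $X_k$; the paper uses this implicitly. Your semicontinuity and semi-invariant sketches are only motivation: as you note yourself, semicontinuity gives just the easy direction, and the other direction is exactly what the cited theorem supplies.
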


\begin{remark}
  An inspection of the tables by Brion and Mikityuk shows that all
  indecomposable spherical pairs $(G,H)$ defined over $\BBC$ with $G$
  not simple have a $\BBZ$-form. Their $k$-forms comprise items
  (S0)--(S8) of \Cref{Tab:BMKR}.
\end{remark}

\subsection{Some sphericity criteria}

The following numerical criterion is fundamental in identifying or
ruling out candidates for spherical subgroups.

\begin{lemma}
  \label{lemma3}
  Let $H\subseteq G$ be spherical in $G$. Then
  \begin{equation}
    \label{eq:2}
    \dim H\geqslant\dim G/B=\frac{1}{2}(\dim G-\rk G).
  \end{equation}
\end{lemma}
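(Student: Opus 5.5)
The plan is the standard dimension count: an open $H$-orbit in $G/B$ has dimension at most $\dim H$. Since $H$ is spherical, $H$ has a dense orbit $Hx$ on the flag variety $G/B$. Orbits are locally closed, so a dense orbit is open in the irreducible variety $G/B$. Hence
\[
\dim G/B=\dim Hx=\dim H-\dim C_H(x)\leqslant \dim H .
\]
This is the inequality \eqref{eq:2}.

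It remains to check the identity $\dim G/B=\frac12(\dim G-\rk G)$. I would use the root decomposition with respect to a maximal torus $T\subseteq B$: $\dim G=\rk G+|\Phi|$. Also $B=TU$, where $\dim U=|\Phi^+|=\frac12|\Phi|$, so $\dim B=\rk G+\frac12|\Phi|$. Subtracting gives $\dim G/B=\dim G-\dim B=\frac12|\Phi|=\frac12(\dim G-\rk G)$.

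There is no real obstacle here. The only point needing care is that the orbit-dimension formula holds for reduced stabilizers in arbitrary characteristic. This is fine, since $\dim Hx=\dim H-\dim C_H(x)$ holds at the level of underlying varieties regardless of separability. Equivalently, one can use the alternative formulation that $B$ has an open orbit on $G/H$. This gives $\dim G/H\leqslant\dim B$, i.e. $\dim G-\dim H\leqslant \dim B$, which rearranges to the same inequality.
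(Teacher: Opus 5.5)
Your proof is correct and is the same dimension count as the paper's. The paper phrases it via the open $B$-orbit on $G/H$, giving $\dim G/H\leqslant\dim B$, which is exactly the alternative you note at the end; you additionally verify $\dim G/B=\frac12(\dim G-\rk G)$ from the root decomposition, which the paper takes as standard.
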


\begin{proof}
  By definition, $B$ has an open orbit in $G/H$. Hence
  $\dim B\geqslant \dim G/H$ which is equivalent to \eqref{eq:2}.
\end{proof}

We frequently use the following ``transitivity'' property for
spherical subgroups, e.g., see \cite[Lem.~2.3]{KR}.

\begin{lemma}[Weak Transitivity]\label{lemma:weak}
  \label{lem:subgroups}
  Let $H_1\subseteq H_2\subseteq G$ be connected reductive subgroups
  of $G$.  If $H_1$ is spherical in $G$, then $H_1$ is spherical in
  $H_2$ and $H_2$ is spherical in $G$.
\end{lemma}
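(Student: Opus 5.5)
The plan is to prove the two assertions separately, using only the definition of sphericity and elementary facts about orbits under equivariant maps. The key tool in both parts is the following observation. Let $\pi:X\to Y$ be a surjective morphism of varieties that is equivariant for a connected group $K$. If $K$ has an open orbit in $X$, then $K$ has an open orbit in $Y$. The reason is that the image of a dense $K$-orbit is a dense $K$-orbit, and an orbit is open in its closure. So a dense orbit in $Y$ is open.

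\emph{Step 1: $H_2$ is spherical in $G$.} Since $H_1$ is spherical, $B$ has an open orbit in $G/H_1$. The natural projection $G/H_1\to G/H_2$ is surjective and $G$-equivariant, hence $B$-equivariant. By the observation, $B$ has an open orbit in $G/H_2$, so $H_2$ is spherical in $G$.

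\emph{Step 2: $H_1$ is spherical in $H_2$.} Here I would use the alternative description from the introduction: $H_1$ has an open orbit on the flag variety $G/B$. I will not need reductivity of $H_1$ here.
\begin{itemize}
\item Pick $x\in G/B$ with $H_1x$ open. Since $H_1x\subseteq H_2x$, the orbit $H_2x$ is also open in $G/B$. Hence $H_1x$ is open in $H_2x\cong H_2/S$, where $S:=C_{H_2}(x)=H_2\cap B_x$ and $B_x$ is the Borel subgroup of $G$ fixing $x$.
\item The group $S$ is a closed subgroup of the solvable group $B_x$, so $S^0$ is connected and solvable. Therefore $S^0$ lies in some Borel subgroup $B_2$ of $H_2$.
\item Consider the $H_2$-equivariant maps $H_2/B_2\leftarrow H_2/S^0\to H_2/S$. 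The right map is finite and surjective. The preimage of the open orbit $H_1x$ is open and is a finite union of $H_1$-orbits. All of them have the same dimension as $H_1x$, which equals $\dim H_2/S^0$, so each is open.
\item Finally, apply the observation to the surjection $H_2/S^0\to H_2/B_2$. This gives an open $H_1$-orbit in $H_2/B_2$, i.e., $H_1$ is spherical in $H_2$.
\end{itemize}

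\emph{Main obstacle.} I expect the only delicate point to be the positive-characteristic bookkeeping. Orbit maps $H_2/C_{H_2}(x)\to H_2x$ may be purely inseparable bijections rather than isomorphisms, since stabilizers are taken reduced. However, every argument above uses only dimensions, openness, and surjectivity, and all of these are insensitive to inseparability. I would check each identification only up to a bijective morphism.
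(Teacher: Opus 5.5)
Your proof is correct. In Step 2, the inseparability concern you flag is indeed harmless: you only need the preimage of the open orbit $H_1x$ under the continuous bijection $H_2/S\to H_2x$, and that preimage is the $H_1$-orbit $H_1S/S$, hence open.

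The paper gives no argument of its own for this lemma; it simply cites \cite[Lem.~2.3]{KR}. What you wrote is the standard direct proof, and it does not use reductivity. An equivalent formulation: choose $B$ so that $BH_1$ is open in $G$. Then $BH_1\cap H_2=(B\cap H_2)H_1$ is open in $H_2$, and the connected solvable group $(B\cap H_2)^0$ lies in a Borel subgroup of $H_2$.
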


This criterion is just necessary. To actually show that $H_1$ is
spherical in $G$ we need more detailed information. To this end,
consider for any quasi-affine $G$-variety $X$ the group $\Xi(X)$ of
characters of $B$-semi-invariant rational functions on $X$. We define
the rank of $X$ as the $\BBZ$-rank of $\Xi(X)$. Note that $\Xi(X)$ is
generated by the set of highest weights occurring in the
representation $k[X]$ of regular functions on $X$.

Let $S_0$ be the set of simple roots $\alpha$ of $G$ (with respect to
$B$) such that the coroot $\alpha^\vee$ is orthogonal to
$\Xi(X)$. Then attached to $S_0$, there is a parabolic subgroup
$P = P(X)$ of $G$ such that $\Xi(X)\subseteq\Xi(P)$, where $\Xi(P)$ is
the character group of $P$.  We define the subgroup $P_0$ of $P$ by
$P_0 = \{y \in P \mid \chi(y)=1\text{ for all } \chi\in\Xi(X)\}$.

\begin{theorem}[{\cite[Thm.\ 4.1]{KR}}]
  \label{thm:genstab}
  Let $X$ be a quasi-affine $G$-variety. Let $P = P(X)$ be as
  above. Then there is a $P$-invariant dense open subset $X_0$ of $X$
  such that $C_P(x) R_u(P) = P_0$ and $C_P(x)\cap R_u(P)$ is finite
  for all $x\in X_0$. In particular, $C_P(x)$ is a reductive group
  which is isogenous to a Levi subgroup of $P_0$.
\end{theorem}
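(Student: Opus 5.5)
The plan is to prove the statement through a local structure theorem for the quasi-affine $G$-variety $X$, in the spirit of Brion--Luna--Vust and Knop, arranged so that it works in every characteristic $p$. The key steps are: build a slice, identify its weight lattice with $\Xi(X)$, show that a Levi subgroup of $P_0$ acts trivially on the slice, and then read off the stabilizers.

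\emph{Step 1: the slice.} Since $X$ is quasi-affine, the $B$-semi-invariant rational functions are quotients of $B$-semi-invariants in $k[X]$. Their weights generate $\Xi(X)$. Choose finitely many highest weight vectors $f_1,\dots,f_r\in k[X]$ whose weights $\chi_1,\dots,\chi_r$ generate $\Xi(X)$ as a group. Put $f=f_1\cdots f_r$, with weight $\chi=\sum\chi_i$. By the definition of $S_0$, $\langle\chi,\alpha^\vee\rangle\ne0$ exactly for the simple roots $\alpha\notin S_0$. Hence the stabilizer of the line $kf$ is the parabolic $P=P(X)$. I will work with the $P$-stable open set $X_f=\{f\ne0\}$. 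Fix a Levi subgroup $L\subseteq P$ containing $T\subseteq B$.

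Next I would show that $X_f\cong R_u(P)\times Z$ for some closed $L$-stable subvariety $Z\subseteq X_f$. The standard construction uses the derivative of the action, i.e. the moment-type map $X_f\to\mathfrak g^*$, $x\mapsto (\xi\mapsto \xi f/f)(x)$. I expect this to be the main obstacle: in characteristic $p$ the derivative of the action can degenerate, so Lie algebra arguments are unreliable.

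My first attempt avoids Lie algebras. Embed $X$ equivariantly into a $G$-module $V$ in which $f$ extends to the highest weight vector of the dual module $V^*$. Then construct the map $X_f\to R_u(P)$ group-theoretically: the $R_u(P)$-orbit of the lowest weight line in $V^*$ is open in an affine space, which identifies it with $R_u(P)$. Alternatively, one can reduce to the projective-space situation already used in \cite{KR}.

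\emph{Step 2: the weight lattice of the slice.} The $B$-semi-invariant functions on $X$ restrict to $B_L$-semi-invariant functions on $Z$, where $B_L=B\cap L$. Since $X_f=R_u(P)\times Z$, this restriction is a bijection. Hence $\Xi_L(Z)=\Xi(X)$, and every coroot $\alpha^\vee$ of $L$ annihilates $\Xi_L(Z)$.

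\emph{Step 3: triviality of the derived action.} I want to show that the connected kernel $L_0:=(L\cap P_0)^0$ acts trivially on a dense open subset of $Z$. By the definition of $P_0$, $L_0$ contains $[L,L]$, and all $B$-eigencharacters of $Z$ vanish on it.

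First, $[L,L]$ has an open orbit in each generic fibre of the rational quotient by $L$. Otherwise a $B_L$-semi-invariant would appear whose weight pairs nontrivially with some coroot of $L$. Then I would apply \Cref{lem:4}, or a rigidity-of-tori argument as in its proof, to the simple factors of $[L,L]$ to show that these orbits are points. Similarly, the torus part of $L_0$ acts trivially, because its characters are exactly those killed by $\Xi(X)$.

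\emph{Step 4: the stabilizers.} Now take $x\in Z$ generic and set $X_0=R_u(P)\cdot Z_{\mathrm{gen}}$, which is dense, open and $P$-invariant. Then $C_P(x)\cap R_u(P)$ is finite, indeed trivial for the product structure. Moreover $C_P(x)$ contains $L_0$ and is contained in $P_0$, since $P_0$ is the common kernel of the $\chi\in\Xi(X)$ and $P$ scales each $f_i$ by $\chi_i$. Comparing dimensions then gives $C_P(x)R_u(P)=P_0$. Finally, $C_P(x)$ is isogenous to a Levi subgroup of $P_0$ because $C_P(x)\cap R_u(P)$ is finite.
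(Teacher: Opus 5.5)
The paper does not prove this statement; it quotes it from \cite[Thm.~4.1]{KR}. Judged on its own, your plan has a genuine gap at its core. In characteristic $0$ it is the standard local-structure route of Brion--Luna--Vust and Knop, built on the moment map. But Step~1, the $P$-equivariant decomposition $X_f\cong R_u(P)\times Z$, is false in positive characteristic. So is the consequence you draw in Step~4, that $C_P(x)\cap R_u(P)$ is trivial.

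Here is a counterexample. Take $p=2$, $G=\SL(2)$ and $X=\SL(2)/N(T)$, which is affine. Its points are unordered pairs $\{a,b\}$ of distinct points of $\mathbb{P}^1$, because the conjugates of $N(T)$ are exactly the stabilizers of such pairs. The group acts nontrivially on the affine variety $X$, so $\Xi(X)\neq 0$; hence $P(X)=B$ and $R_u(P)=U$. The Borel subgroup fixing $\infty$ acts by $z\mapsto c^2z+ce$. Take $a,b\neq\infty$. An element fixing both points forces $c^2=1$, hence $c=1$ and $e=0$. An element swapping them forces $c^2(a-b)=b-a$, so $c^2=-1=1$ and $c=1$. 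Hence $C_B(x)=\{1,u\}$, where $u\colon z\mapsto z+a+b$ is a nontrivial element of $U$, and this holds for every $x$ in the open $B$-orbit.

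So $R_u(P)$ has stabilizers of order $2$ on a dense open set. No $P$-stable open subset can therefore be $P$-equivariantly isomorphic to $R_u(P)\times Z$. This is exactly why the theorem only asserts that $C_P(x)\cap R_u(P)$ is \emph{finite}. Your sketched construction fails for the same reason. First, the orbit map of $R_u(P)$ on the extreme weight line can be inseparable; in this example it is $t\mapsto t^2$. Second, and more seriously, you never construct the $P$-equivariant retraction of $X_f$ onto that orbit, which is the entire content of a local structure theorem. Steps~2 and~3 are formulated on $Z$, so they fall with Step~1. Separately, Step~3 should rest on quasi-affineness, not on \Cref{lem:4}. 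A nontrivial action of $[L,L]$ on a quasi-affine variety produces a $U_L$-invariant weight vector among the regular functions whose weight pairs nontrivially with some coroot of $L$.

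What you need is an argument that never splits off $R_u(P)$. Your observation that $C_P(x)\subseteq P_0$ on $X_f$ is correct and should be kept.

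For finiteness, let $M\subseteq k[X]$ be the $G$-module generated by $f$, and use the $P$-equivariant map $X_f\to\mathbb{P}(M^*)_f=\{[\ell]:\ell(f)\neq0\}$. Since $M=\mathrm{Dist}(U^-)f$, a regular dominant cocharacter of $T$ (or its inverse) contracts every point of $\mathbb{P}(M^*)_f$ to the line $[\ell_0]$ dual to $kf$. That line is $B^-$-stable and its stabilizer lies in the opposite parabolic $P^-$, so $C_{R_u(P)}([\ell_0])=1$. Upper semicontinuity of stabilizer dimension, together with conjugation by $T$, then makes every $R_u(P)$-stabilizer on $\mathbb{P}(M^*)_f$ finite, and hence on $X_f$.

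For the inclusion $P_0\subseteq C_P(x)R_u(P)$, run your weight argument on the $L$-module $k[X]^{R_u(P)}$ instead of $k[Z]$. Its $B_L$-eigenvectors are $B$-eigenvectors in $k[X]$, so $L\cap P_0=[L,L]\,(T\cap P_0)$ acts trivially on it. It therefore acts trivially on $k(X)^{R_u(P)}$, which is the fraction field of $k[X]^{R_u(P)}$ because $X$ is quasi-affine and $R_u(P)$ is unipotent. Rosenlicht's theorem then gives $(L\cap P_0)x\subseteq R_u(P)x$ for generic $x$.
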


\begin{definition}\label{def:LX}
  In the setting of \Cref{thm:genstab}, let $L(X):=C_P(x)^0$ for any $x\in X_0$. It is
  isogenous to a Levi factor of $P_0^0$.
\end{definition}

Observe that if we put $H:=C_G(x)$, then $L(X)=(P\cap H)^0$ can also be
considered as a subgroup of $H$. The significance of $L(X)$ is a
converse to \Cref{lemma:weak}:

\begin{lemma}[Strong Transitivity]\label{lemma:hard}
  Let $G$ be a connected, reductive group and let
  $H_1\subseteq H_2\subseteq G$ be connected, reductive subgroups. Put
  $L:=L(G/H_2)$. Then $G/H_1$ is spherical if and only if $G/H_2$ is
  spherical and $H_2/H_1$ is spherical as an $L$-variety.
\end{lemma}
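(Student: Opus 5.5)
The plan is to study the fibration $\pi: G/H_1\to G/H_2$, whose fiber is $H_2/H_1$, and to use \Cref{thm:genstab} to describe $\pi$ over a suitable open subset of $X:=G/H_2$. Since $H_2$ is reductive, $X$ is affine and hence quasi-affine, so \Cref{thm:genstab} applies. Let $P=P(X)$, $P_0$ and $X_0$ be as there.

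\textbf{Necessity.} If $G/H_1$ is spherical, then $G/H_2$ is spherical by \Cref{lemma:weak}. The statement that $H_2/H_1$ is $L$-spherical is the real content, and I will derive it from the local structure established below. That structure turns out to be an equivalence, so it also gives sufficiency.

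\textbf{Reduction to one $P$-orbit.} Assume $G/H_2$ is spherical. The open $B$-orbit of $X$ meets the dense open $P$-invariant set $X_0$, so it lies in $X_0$. Pick a point $x$ in it. Then $Px\supseteq Bx$ is open in $X$. After conjugating $H_1\subseteq H_2$ by a common element, we may assume $x=1H_2$. Then $C_G(x)=H_2$ and $M:=C_P(x)=P\cap H_2$, with $M^0=L$.

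The preimage $\pi^{-1}(Px)$ is open in $G/H_1$ and is $P$-equivariantly isomorphic to $P\times^{M}(H_2/H_1)$. Hence $G/H_1$ is spherical if and only if $B$ has an open orbit on $P\times^{M}F$, where $F:=H_2/H_1$. Now $B$-orbits on $P\times^M F$ correspond to $(B\times M)$-orbits on $P\times F$, and these in turn correspond to $M$-orbits on $(B\backslash P)\times F$. So the condition is that $M$, or equivalently $L$ (passing to the identity component does not affect the existence of an open orbit), has an open orbit on $(B\backslash P)\times F$.

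\textbf{Local structure of $L$.} Write $P=L_PR_u(P)$ with $B\subseteq P$. Then $B\backslash P\cong B_{L_P}\backslash L_P$, where $B_{L_P}=B\cap L_P$, is the flag variety of $L_P$. By \Cref{thm:genstab}, $MR_u(P)=P_0$ and $M\cap R_u(P)$ is finite. So the image $\bar L$ of $L$ in $L_P$ is a Levi subgroup of $P_0^0$, and $L\to\bar L$ is an isogeny.

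Since $\Xi(X)\subseteq\Xi(P)$, the group $P_0$ contains the derived subgroup of $P$. Hence $\bar L$ contains $(L_P,L_P)$, up to conjugating $\bar L$ inside $L_P$, which we may do because any two Levi subgroups of $P_0^0$ are conjugate. Consequently $\bar L$ acts transitively on $B_{L_P}\backslash L_P$, and the stabilizer of a point is a Borel subgroup $B_L$ of $L$.

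Therefore $L$ has an open orbit on $(B\backslash P)\times F$ if and only if $B_L$ has an open orbit on $F$, that is, if and only if $H_2/H_1$ is a spherical $L$-variety. Combined with the reduction above, this proves both implications.

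\textbf{Expected obstacles.} The main delicate point is the bookkeeping in the last step. One must check that $\bar L$ really contains the derived group of $L_P$; this uses that $\Xi(X)$ consists of characters of $P$, so that $P_0\supseteq(P,P)$. One must also check that the $\bar L$-stabilizer of a point of the flag variety of $L_P$ is a Borel subgroup of $L$, up to the finite kernel of $L\to\bar L$, which is harmless for orbit dimensions. A secondary point is justifying $\pi^{-1}(Px)\cong P\times^{M}(H_2/H_1)$ as varieties. This follows because $\pi$ is $G$-equivariant with fiber $H_2/H_1$ over $x$, and the map $P\times^M\pi^{-1}(x)\to\pi^{-1}(Px)$ is bijective and equivariant. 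Bijectivity is enough here, since only the existence of open orbits and dimensions are needed.
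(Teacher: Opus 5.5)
Your proposal is correct and is essentially the paper's argument. Both take $x$ in the open $B$-orbit of $G/H_2$, identify the preimage of $Px$ with $P\times^{C_P(x)}(H_2/H_1)$, and reduce sphericity of $G/H_1$ to a Borel subgroup of $L=C_P(x)^0$ (namely $C_B(x)^0$) having an open orbit on the fiber. The one difference is that the paper simply uses that the open $B$-orbit is $P$-invariant and that $C_B(x)$ is a Borel subgroup of $C_P(x)$. You instead deduce the transitivity of $C_P(x)$ on $B\backslash P$ (equivalently $Px=Bx$) from $C_P(x)R_u(P)=P_0\supseteq(P,P)$ in \Cref{thm:genstab}, which fills in details the paper leaves implicit.
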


\begin{proof}
  Consider the canonical morphism $\phi:X_1:=G/H_1\to X_2:=G/H_2$. Let $x$ be a
  point in the open $B$-orbit $X_2^0$ of $X_2$. Since $X_2^0$ is also
  $P$-invariant, we get a bijective morphism $P/C_P(x)\to X_2^0$, hence a
  bijective morphism
  $P\times^{C_P(x)}F\to\phi^{-1}(X_2^0)\subseteq X_1$ with
  $F :=\phi^{-1}(x)\cong H_2/H_1$. Thus, $B$ has an open orbit in $X_1$
  if and only if $C_P(x)=C_B(x)$ has an open orbit in $F$. Now
  $C_B(x)$ is a Borel subgroup of $C_P(x)$. Thus $X_1$ is spherical for $G$ if
  and only if $F$ is spherical as an $L (=C_P(x)^0)$-variety.
\end{proof}

To compute $L(X)$ we use the following comparison result. For this put
$\Xi_\BBQ(X):=\Xi(X)\otimes_\BBZ\BBQ$.

 \begin{lemma}[{\cite[Lem.~3.7]{Knop5}}]
   Assume that $X_\BBC$ has a $\BBZ$-form with geometric fiber $X_k$.  Then
   $\Xi_\BBQ(X_k)=\Xi_\BBQ(X_\BBC)$. In particular, the group $L(X_k)$
   is a reductive group whose root datum is, up to isogeny, the same
   sub-root system of the root system of $G$ as that of $L(X_\BBC)$.
 \end{lemma}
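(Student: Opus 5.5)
The plan is to compare the weight lattices through the $\BBZ$-form. By passing to an isogenous pair, I may assume there is a flat finitely generated $\BBZ$-algebra $A$ with a $\CG$-action such that $A\otimes_\BBZ\BBC=\BBC[X_\BBC]$ and $A\otimes_\BBZ k=k[X_k]$; if $X$ is only quasi-affine, I use a $\CG$-stable $\BBZ$-form of the coordinate ring, or work with rational functions defined on a $\BBZ$-form of an affine cover. Fix a Borel subgroup scheme $\mathcal B=\mathcal T\mathcal U$ of $\CG$. Since $\Xi(X)$ is spanned by the weights of $B$-eigenvectors in the coordinate ring, it suffices to prove two things. First, every weight occurring in $\BBC[X_\BBC]^{(B)}$ occurs in $k[X_k]^{(B)}$. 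Second, for every weight $\lambda$ occurring in $k[X_k]^{(B)}$, some positive multiple $n\lambda$ occurs in $\BBC[X_\BBC]^{(B)}$. Together these give $\Xi_\BBQ(X_k)=\Xi_\BBQ(X_\BBC)$.

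\emph{First inclusion.} The $\lambda$-weight space of $A^{\mathcal U}$ is a $\BBZ$-lattice in the $\lambda$-weight space of $\BBC[X_\BBC]^U$, because $\mathcal U$-invariance and $\mathcal T$-weight are defined over $\BBZ$ and $A$ is flat. Take a nonzero $B$-eigenfunction $f$ of weight $\lambda$. After clearing denominators and dividing by the appropriate power of $p$, $f$ lies in $A$ but not in $pA$. Its image in $A\otimes_\BBZ k$ is then nonzero. It is still $U$-invariant of $T$-weight $\lambda$, since these conditions are equations with integer coefficients. Hence $\lambda\in\Xi(X_k)$.

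\emph{Reverse inclusion.} This is the step I expect to be the main obstacle. The base change map $A^{\mathcal U}\otimes_\BBZ k\to (A\otimes_\BBZ k)^{U}$ need not be surjective. I would invoke the power-surjectivity of invariants under reductive group schemes, in the form of Mathieu's results on good filtrations, applied to $\mathcal U$-invariants $=$ $\CG$-invariants of $A\otimes\BBZ[\CG/\mathcal U]$. This says that for every $\bar f\in k[X_k]^U$ of weight $\lambda$ there is $q=p^m$ with $\bar f^{\,q}$ in the image of $A^{\mathcal U}\otimes k$. By $T$-equivariance, we may take a preimage of weight $q\lambda$. A nonzero preimage in $A^{\mathcal U}_{q\lambda}$ yields a nonzero element of $\BBC[X_\BBC]^U_{q\lambda}$, so $q\lambda\in\Xi(X_\BBC)$. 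If this citation is unavailable, a fallback is to prove equality of ranks directly. I would write $\rk X$ as the difference between the generic $B$-orbit dimension and the generic $U$-orbit dimension. I would then argue that both are constant along $\operatorname{Spec}\BBZ$, using semicontinuity of fiber dimension of the stabilizer schemes together with the first inclusion.

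\emph{Consequence for $L$.} The set $S_0$ of simple roots $\alpha$ with $\alpha^\vee\perp\Xi(X)$ depends only on $\Xi_\BBQ(X)$. Hence $P(X_k)$ and $P(X_\BBC)$ correspond to the same subset of the common based root datum of $\CG$. The connected group $P_0^0$ has Levi factor with root system the one spanned by $S_0$, and maximal torus the identity component of $\bigcap_{\chi\in\Xi(X)}\ker\chi$. The latter is again determined by $\Xi_\BBQ(X)$. By \Cref{thm:genstab}, $L(X)$ is isogenous to this Levi factor. Therefore $L(X_k)$ and $L(X_\BBC)$ have, up to isogeny, the same root datum as sub-root systems of that of $G$.
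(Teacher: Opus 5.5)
The paper gives no proof of this lemma; it quotes it from \cite{Knop5}. Your argument is a correct self-contained proof, in the spirit of the deformation technique behind \Cref{Theorem1}. Reducing primitive integral $B$-eigenvectors mod $p$ gives $\Xi(X_\BBC)\subseteq\Xi(X_k)$. Lifting a power of every $B$-eigenvector of $k[X_k]$ gives $\Xi(X_k)\subseteq\Xi_\BBQ(X_\BBC)$. The statement about $L$ then follows as you say. The set $S_0$ and the identity component of $\bigcap_{\chi\in\Xi(X)}\ker\chi$ depend only on $\Xi_\BBQ(X)$, and \Cref{thm:genstab} identifies $L(X)$, up to isogeny, with the corresponding Levi factor of $P_0^0$.

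Three points should be tightened.

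First, the input for the reverse inclusion is geometric reductivity of Chevalley group schemes over a noetherian base (Seshadri), or equivalently the power reductivity of Franjou--van der Kallen. You apply it to $\CG$ acting on $A\otimes_\BBZ\BBZ[\CG]^{\mathcal U}$. For this transfer step you also need $\BBZ[\CG]^{\mathcal U}\otimes k=k[G_k]^{U}$, which is Kempf vanishing over $\BBZ$. The input is not a statement about good filtrations of $k[X_k]$: those would give honest surjectivity, but they do not exist for $G/H$ in general.

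Second, quote the result in a form that covers the base change to $k$, not only the surjection $A\to A/pA$. An arbitrary exponent does not formally pass along $\mathbb F_p\to k$. For example, work over $W(k)$ with $A\otimes W(k)\to A\otimes k$; there any positive exponent suffices for your purpose.

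Third, discard the fallback. Upper semicontinuity of stabilizer dimensions only says that the generic $B$-orbit and the generic $U$-orbit can both shrink under specialization. So it gives no control of their difference. In the spherical case it merely reproduces $\rk X_k\geqslant\rk X_\BBC$, which you already have from the first inclusion.
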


\begin{remark}
  Over $\BBC$, Bravi--Pezzini calculated in \cite{BrPezz} the Luna
  diagrams of all affine, homogeneous, spherical varieties. The group
  $L(G/H)$ can easily be read off: First, the simple roots of $P(X)$
  are those without ``decoration''. Second, the rank of the character
  group $\Xi(X)$ equals the number of spherical roots. From this,
  $L(G/H)$ can be determined up to isogeny. It is indicated in the
  fourth column of \Cref{Tab:BMKR}.
\end{remark}

\section{Soundness of \Cref{Tab:BMKR}}

\begin{lemma}
  \label{lem:known}
  All varieties $G/H$ from \Cref{Tab:BMKR} are spherical.
\end{lemma}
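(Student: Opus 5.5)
The table splits naturally into the items (S0)--(S8), which are the $k$-forms of the Brion--Mikityuk list, and the genuinely new series (S9) in characteristic $2$. I would handle the two parts by different methods.

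\textbf{Items (S0)--(S8).} These are easy given the preliminaries. By the remark following \Cref{Theorem1}, each of them is the geometric fiber $X_k$ of a pair $(\CG,\CH)$ over $\BBZ$, with $\CH$ a closed flat subgroup scheme. Its lift $X_\BBC$ is a spherical homogeneous variety by Brion \cite{Bri87} and Mikityuk \cite{mik}. \Cref{Theorem1} then shows that $X_k$ is spherical for every algebraically closed $k$.

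Some entries are only given up to isogeny, including Frobenius twists and exceptional isogenies. For these I would invoke \Cref{lem:isogeny}: sphericity is an isogeny invariant, so it suffices to check one representative in each isogeny class. The one point needing care is flatness of $\CH$ over $\BBZ$ for the chosen $\BBZ$-forms. For diagonal embeddings and products of split subgroups such as $\SL(n)\times\SL(m)\subset\SL(n+m)$, $\Sp(2n)$, $\SOO(n)$, this is standard, so I expect it to be routine.

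\textbf{Series (S9).} This does not lift, since it involves the representation $\Delta_q$ with $q=2^m$ and the embedding $\SL(2)\to\SOO(4)$ (or $\Sp(4)$), so here I would argue directly with \Cref{lemma:hard}. The plan is to sandwich $H=H_1\subseteq H_2\subseteq G$, where $H_2$ is a subgroup whose pair $(G,H_2)$ lies in (S0)--(S8), or is a product of simple pairs from \cite{KR}. For such $H_2$, sphericity of $G/H_2$ is already known from the first part, and $L(G/H_2)$ can be read off the fourth column of \Cref{Tab:BMKR}, using the comparison lemma \cite[Lem.~3.7]{Knop5}.

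It then remains to show that $H_2/H_1$ is spherical as an $L$-variety. This is typically a smaller problem: either $H_2/H_1$ is a simple-group spherical pair from \cite{KR}, such as $\SL(2)$ embedded via $\Delta_q$ in $\Sp(4)$ (the finite orbit module case \cite[Lem.~2.6]{GLMS}), or it reduces to checking that a torus or $\SL(2)$ has an open orbit on a low-dimensional variety. In the latter case I would verify it by a dimension count together with a generic-stabilizer computation.

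\textbf{Main obstacle.} The hard step is (S9): choosing the intermediate group $H_2$ correctly and identifying how $L(G/H_2)$ acts on the fiber $H_2/H_1$. The embedding of $L$ into $H_2$ has to be tracked through the Frobenius twist. I would first try to choose $H_2$ as the product of the simple factors of $G$'s natural subgroups containing $H$, so that $L$ is a diagonal-type subgroup.
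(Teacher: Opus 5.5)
\textbf{Verdict: genuine gap in (S9).} Your treatment of (S0)--(S8) is the paper's argument: these pairs are defined over $\BBZ$, so \Cref{Theorem1} transfers the results of Brion and Mikityuk. For (S9), however, you give only a strategy, and it rests on a misreading of the case.

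\textbf{What (S9) actually is.} It does not involve $\Delta_q$. Frobenius-twisted diagonal gluings are absorbed into the table by isogeny, e.g.\ into (S0). Its failure to lift comes from the characteristic-$2$ subgroup $\G_2\times\Sp(2)\subset\Sp(8)$ of (C18). This is glued along the $\Sp(2)$-factor to $\Sp(2)\times\Sp(2n)\subset\Sp(2n+2)$.

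\textbf{The intermediate group and its $L$.} The natural intermediate group is $\bar H=(\G_2\times\Sp(2))\times(\Sp(2)\times\Sp(2n))$. Then $G/\bar H=X_1\times X_2$ is spherical by \cite{KR}, and $\bar H/H\cong\Sp(2)$. Your plan to read off $L(G/\bar H)$ from \Cref{Tab:BMKR} via the comparison lemma does \emph{not} work here, because $X_1=\Sp(8)/(\G_2\times\Sp(2))$ does not lift. One needs \cite[Prop.~4.5]{KR}, which gives $\Xi(X_1)=\langle\omega_1+\omega_4,\omega_2,\omega_3\rangle_\BBZ$. Hence $L_1\cong\Gm$ with cocharacter $\alpha_1^\vee-\alpha_4^\vee$, and $L_2\cong\Sp(2)\times\Sp(2n-2)$.

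\textbf{The missing decisive step.} Let $S$ be the projection of $L_1$ to the $\Sp(2)$-factor of $\G_2\times\Sp(2)$. Then $\Sp(2)$ is spherical under $S\times\Sp(2)$, acting on both sides, if and only if $S\ne 1$. A dimension count cannot decide this. If $L_1$ lay inside $\G_2$, only a one-sided $\Sp(2)$ would act on the $3$-dimensional fiber, and (S9) would not be spherical.

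The paper settles this by a weight argument:
\begin{itemize}
\item Inside $\G_2\times\Sp(2)$, the torus $L_1$ is $W_{\Sp(8)}$-conjugate to $\lambda=\epsilon_1-\epsilon_2-\epsilon_4$. So its cocharacter has the form $\pm\epsilon_i\pm\epsilon_j\pm\epsilon_k$.
\item Cocharacters of the maximal torus of $\G_2\subset\Sp(6)$ are orthogonal to $\epsilon_1+\epsilon_2+\epsilon_3$.
\item Since $\pm1\pm1\pm1\ne 0$, this forces $k=4$. So the cocharacter has a nonzero $\epsilon_4$-component, i.e.\ $S\ne 1$.
\end{itemize}

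\textbf{The alternative choice of $H_2$.} You could instead take $H_2$ from (S6) with $m=3$, which does lift, with $L=\Sp(4)\cdot\ft^1\cdot\Sp(2n-2)$. You would then face a comparable problem. You would have to determine how the $\ft^1$-factor projects to $\Sp(6)$. You would then have to decide whether $\Sp(6)/\G_2$ is spherical under the resulting group. Note that $\Sp(4)$ alone fails by \Cref{lemma3}. Without one of these computations, your argument for (S9) is incomplete.
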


\begin{proof}
  All items of \Cref{Tab:BMKR}, except for (S9), are defined over
  $\BBZ$. Hence their sphericity follows via \Cref{Theorem1} from
  the work of Mikityuk and Brion.
	
  It remains to show that the case (S9) is spherical: Let $\bar H$ be the
  subgroup $(\G_2 \times \Sp(2)) \times (\Sp(2) \times \Sp(2n))$ of
  $G$ such that $H$ is contained in $\bar H$ diagonally.  Note that by
  \Cref{lemma:hard}, $G/H$ is spherical if and only if $G/ \bar H$ is
  spherical and $\bar H/H$ is spherical as an $L(G/\bar H)$-variety.  Now
  $G/\bar H = X_1 \times X_2$, where
  $X_1 = \Sp(8)/(\G_2 \times \Sp(2))$ and
  $X_2 = \Sp(2n +2 )/(\Sp(2) \times \Sp(2n))$.  Since both $X_1$ and
  $X_2$ are spherical by \cite{KR}, $G/\bar H$ is also spherical.
	
  Now $L(G/\bar H)$ decomposes as $L_1 \times L_2$, where
  $L_1 \cong \mathbb{G}_m$, see \cite[Prop.\ 4.5]{KR}, and
  $L_2 \cong \Sp(2) \times \Sp(2n-2)$. Since $\bar H/H \cong \Sp(2)$,
  we see that $G/H$ is spherical if and only if $\Sp(2)$ is spherical
  as an $(S \times \Sp(2))$-variety, where $S$ is the image of $L_1$
  in $\Sp(2)$. It remains to show that $S \ne 1$.
	
  We have
  $\Xi(X_1)=\langle \omega_1+\omega_4,\omega_2,\omega_3\rangle_\BBZ$,
  thanks to \cite[Prop.\ 4.5]{KR}, where $\omega_i$ denotes the $i$-th
  fundamental weight of $G$ with respect to the labeling in
  \cite[Planche III]{bourbaki:groupes}.  Hence $L_1$ is the
  $1$-parameter subgroup of the Borel subgroup of $\Sp(8)$
  corresponding to the cocharacter
  $\lambda:=\alpha_1^\vee - \alpha_4^\vee = \epsilon_1 - \epsilon_2 -
  \epsilon_4$. On the other hand, the characters of the maximal torus
  of $\G_2$ inside $\Sp(6)$ are those which are
  orthogonal to $\epsilon_1 + \epsilon_2 + \epsilon_3$ (see
  \cite[Planche IX]{bourbaki:groupes}). Now $L_1$ as a subgroup of $H$ is
  $W_{\Sp(8)}$-conjugate to $\lambda$. Hence it is of the form
  $\pm \epsilon_i \pm \epsilon_j \pm \epsilon_k$ for
  $1 \leqslant i < j < k \leqslant 4$. Being orthogonal to
  $\epsilon_1 + \epsilon_2 + \epsilon_3$ implies $k = 4$.  This
  entails that the projection $S$ of $L_1$ to the $\Sp(2)$-factor of
  $\G_2 \times \Sp(2)$ is non-trivial.
\end{proof}

\section{Completeness of \Cref{Tab:BMKR}}

\begin{definition}
  \label{def:knownpair}
  We say that a pair $(G,H)$ is \emph{known spherical} if it is
  isogenous to a product of trivial pairs, pairs contained in
  \cite[Tables~1 and~2]{KR}, or pairs from \Cref{Tab:BMKR}.
\end{definition}

Since Table 1 of \cite{KR} is used extensively, we restate
it here in a condensed form for the convenience of the reader. More
precisely, the table is complete only \emph{up to isogeny}, i.e., up
to conjugation, outer automorphisms (such as triality), central
isogenies, and totally inseparable isogenies (such as Frobenius morphisms).

\setcounter{myrow}{0}

\begin{table}[h!t]
\caption{Spherical pairs $H\subset G$ with $G$ classical up to isogeny.} 
\label{table:sp-classical} 
\[
  \begin{array}{>{\stepcounter{myrow}\text{(C\themyrow)}}llll}
\hline
\multicolumn{1}{l}{}& H & G\\
\hline
&\SOO(n) & \SL(n)&n\geqslant 2\\
&S(\GL(m){\times}\GL(n)) & \SL(m{+}n)&m\geqslant n\geqslant 1\\
&\SL(m)\times\SL(n)&\SL(m{+}n)&m>n\geqslant 1\\
&\Sp(2n)&\SL(2n)&n\geqslant 2\\
&\Gm\cdot\Sp(2n)&\SL(2n+1)&n\geqslant 1\\
&\Sp(2n)&\SL(2n+1)&n\geqslant 1\\
\hline
&\GL(n)&\Sp(2n)&n\geqslant 1\\
&\Gm\times\Sp(2n-2)&\Sp(2n)&n\geqslant2\\
&\Sp(2m){\times}\Sp(2n)&\Sp(2m{+}2n)&m, n\geqslant 1\\
\hline                                
&\GL(n)&\SOO(2n)&n\geqslant 2\\
&\SL(n)&\SOO(2n)&n\geqslant 3\text{ odd}\\
&\G_2&\SOO(8)\\
&\SOO(2)\times\Spin(7)&\SOO(10)\\
&\GL(n)&\SOO(2n+1)&n\geqslant 2\\
&\SOO(m){\times}\SOO(n)&\SOO(m{+}n)&m\geqslant n\geqslant 1\\
&\Spin(7)&\SOO(9)\\
&\G_2&\SOO(7)\\
\hline
&\G_2\times\Sp(2)&\Sp(8)&{\color{red}p=2}\\
\hline                                
\end{array}  
\]
\end{table}

The proof of \Cref{thm:main} reduces easily to the following
statement.

\begin{lemma}
  \label{lem:1}
  Suppose that $(G,H)$ is known spherical and $H'\subset H$ is a maximal,
  connected reductive subgroup. Then $(G,H')$ is known spherical or
  not spherical.
\end{lemma}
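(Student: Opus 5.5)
We will prove \Cref{lem:1} by a case analysis organized around the structure of a known spherical pair. By \Cref{def:knownpair} and the Remark after \Cref{def:pairs}, we may pass to an isogenous pair and assume
\[
G=G_1\times\dots\times G_r,\qquad H=H_1\times\dots\times H_r,
\]
where each $(G_i,H_i)$ is trivial, belongs to \cite[Tables~1 and~2]{KR}, or belongs to \Cref{Tab:BMKR}, and where $G$ and $H$ are products of a torus and simple factors. A maximal connected reductive subgroup $H'\subset H$ is of one of two kinds.
\begin{itemize}
\item[(a)] $H'$ is the preimage of a maximal connected reductive subgroup of a single simple factor $H_0$ of $H$, or of a corank-one subtorus of the central torus.
\item[(b)] $H'$ contains a diagonal copy $\Delta H_0$ glued from two isomorphic, or isogenous via Frobenius or exceptional isogenies, simple factors $H_0\times H_0$ of $H$.
\end{itemize}
This dichotomy follows from the Goursat-type description of maximal subgroups of a product of simple groups. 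Through \Cref{lemma0}, \Cref{lem:ss} and \Cref{lem:isogeny}, everything is insensitive to isogeny, so it is enough to treat each type.

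\textbf{Step 1: pruning by dimension.} First we discard most candidates numerically, using \Cref{lemma3}: we need
\[
\dim H'\geqslant \tfrac12(\dim G-\rk G).
\]
Type (b) loses $\dim H_0$ dimensions. By \Cref{lem:diag1}, the largest proper subgroups of $H_0$ in type (a) are parabolic, hence non-reductive, so reductive maximal subgroups lose strictly more than the codimension of a maximal parabolic. Combined with \Cref{lem:4}, this bounds the dimension loss in terms of $\rk H_0$ and leaves only finitely many shapes per family. For the survivors we use \Cref{lem:subgroups}. If $(G,H')$ is spherical, then every intermediate group is spherical in $G$. Moreover, $H'$ is spherical in each factor-wise enlargement, which forces every simple-factor projection to appear in Krämer/KR-type lists.

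\textbf{Step 2: lifting to characteristic zero.} For each surviving candidate that admits a $\BBZ$-form, we apply \Cref{Theorem1}. Then $(G,H')$ is spherical if and only if its complex lift is, and the Brion--Mikityuk lists decide the matter. A spherical lift is one of (S0)--(S8) up to products, so the candidate is known spherical; otherwise it is not spherical. The non-liftable candidates are exactly those involving the $p=2$ pair (C18), $\G_2\times\Sp(2)\subset\Sp(8)$, exceptional isogenies in characteristic $2$ or $3$ (such as $\Sp(2n)\to\SOO(2n+1)$ and $F_4$, $G_2$ special isogenies), or diagonals twisted by Frobenius, as with $\Delta_q$.

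\textbf{Step 3: the non-liftable residue, and the main obstacle.} For these remaining cases we use \Cref{lemma:hard} with $H_1=H'$ and $H_2=H$. Then $(G,H')$ is spherical if and only if $H/H'$ is spherical for $L:=L(G/H)$. The group $L$ is computed by the comparison lemma from \cite{Knop5}, or from \cite[Prop.~4.5]{KR}, exactly as in the proof of \Cref{lem:known} for (S9). The hard part is controlling how $L$ embeds into $H$, especially in characteristic $2$, where an embedding that is $W$-conjugate to a given cocharacter must be located relative to $H'$. Two situations need care.
\begin{itemize}
\item For type (b) diagonals, $H/H'\cong H_0$ must be spherical for $L$ acting by left-right multiplication through the two projections. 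This forces both projections of $L$ to be large and essentially yields (S9) or liftable cases.
\item For Frobenius-twisted diagonals, the twisted subgroup is not isogenous to the untwisted one. Here we argue that the projections of $L$ and their twists are not conjugate in a way that allows an open orbit, which rules out sphericity except in isogenous copies of the untwisted case.
\end{itemize}
I expect this final bookkeeping of the characteristic-$2$ cases involving (C18) and special isogenies to be the main obstacle. Everything else reduces to the existing lists.
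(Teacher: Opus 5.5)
Your framework matches the paper's: the Dynkin-type case split, weak transitivity forcing $(H,H')$ into the known lists, lifting through \Cref{Theorem1}, and \Cref{lemma:hard} with $L(G/H)$ for what does not lift. Your dimension count via \Cref{lem:diag1} and \Cref{lem:4} does dispose of (S0). The gap is Step 3, where the lemma's real content lies. It is not carried out, and the mechanisms you sketch there are wrong.

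First, Frobenius-twisted gluings do not need to be ruled out, because they are isogenous to untwisted ones in the sense of \Cref{def:pairs}. If $H_0\sim\Sp(2)$ is glued via $\rho_i=F_{H_0}^{a_i}$, then $\tau=(F_{G_1}^{a_2},F_{G_2}^{a_1})$ carries the twisted subgroup onto the untwisted one. By \Cref{lem:isogeny}, the twisted pair is spherical exactly when the untwisted one is. Your claim that the twisted subgroup is ``not isogenous to the untwisted one'' is false, and an argument aimed at excluding sphericity there would prove something false.

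Similarly, when one factor is a trivial pair glued through a possibly non-central isogeny, the paper uses the isogenies $(\id,p_2)$ and $(\id,p_1)$ of $G_1\times\tilde H$. These make $\rho_1$ an inclusion and $\rho_2$ the identity, which restores liftability. You do not have this reduction.

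Second, your criterion that both projections of $L$ be large is incorrect. For gluing along $H_0\sim\Sp(2)$, the projection of $L(G_i/H_i)$ is $\Gm$ in each of (S$4'$)--(S$9'$) except (S$6'$), where it is all of $\Sp(2)$. And $\Sp(2)$ is already spherical for $\Gm\times\Sp(2)$. So the right criterion is that one glued factor be of type (S$6'$). Indeed (S9) itself is (S$9'$), with projection $\Gm$, glued to (S$6'$), which contradicts your own prediction.

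Third, nothing you list handles the characteristic-$2$ residue in the case where a single simple factor of $H$ is replaced. When $H/H'\sim\Sp(2n)/\SOO(2n)$ (the non-liftable pair coming from the special isogeny), you must show it is not spherical for $\Gm\times\Sp(2n-2)$. For $n\geqslant 3$ the dimension criterion cannot do this: $\dim\Sp(2n)/\SOO(2n)=2n\leqslant n^2-n+1$, which is the dimension of a Borel subgroup of $\Gm\times\Sp(2n-2)$.

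The paper proves \Cref{lem:p21} by realizing $\Sp(2n)/\O(2n)$ as $X=q+\langle x_1^2,\dots,x_{2n}^2\rangle$. The map $X\to S^2\langle x_1,x_{2n}\rangle$ is a non-constant $\Sp(2n-2)$-invariant with two-dimensional image, so $\Gm\times\Sp(2n-2)$ has no dense orbit. One must also recognize that replacing $\Sp(6)$ by $\G_2$ in (S6) is the single surviving exception, and that it is exactly (S9).

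Without these specific computations, your Step 3 identifies where the difficulty lies but does not resolve it.
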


\begin{proof}[Proof of \Cref{thm:main}] 
  In view of \Cref{lem:known}, it suffices to show that every spherical
  pair $(G,H)$ is known spherical. If $H=G$, then this is clear by
  definition. Otherwise, there is a connected reductive intermediate
  subgroup $H \subset \bar H \subseteq G$ such that $H$ is a maximal
  connected reductive subgroup of $\bar H$. Then $G/\bar H$ is
  spherical by \Cref{lem:subgroups}. Thus, $(G,H)$ is known spherical
  by \Cref{lem:1}.
\end{proof}

The vast majority of cases to be inspected are handled by the following
lemma. It utilizes the classification results of Krämer, Mikityuk and Brion
to their full extent.

\begin{lemma}
  \label{lem:2}
  Assume that the triple $(G,H,H')$ in \Cref{lem:1} is defined over
  $\BBZ$. Then the assertion of \Cref{lem:1} holds for $(G,H,H')$.
\end{lemma}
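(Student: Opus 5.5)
Since the triple $(G,H,H')$ is defined over $\BBZ$, there are split group schemes $\CH'\subseteq\CH\subseteq\CG$ over $\BBZ$ whose geometric fibers over $k$ give $(G,H,H')$ up to isogeny. We therefore get the complex triple $(G_\BBC,H_\BBC,H'_\BBC)$. By \Cref{Theorem1}, $G/H'$ is spherical exactly when $G_\BBC/H'_\BBC$ is spherical. So if $G_\BBC/H'_\BBC$ is not spherical, then $(G,H')$ is not spherical and we are done. It remains to treat the case where $G_\BBC/H'_\BBC$ is spherical.

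In that case, reduce to indecomposable pieces. Decompose $(G_\BBC,H'_\BBC)$, up to isogeny, into a product of trivial pairs and indecomposable spherical pairs. Each indecomposable factor falls into one of two cases. If its group is simple, it appears in Krämer's list, i.e.\ in the characteristic-zero part of \cite[Tables~1 and~2]{KR}. If not, it appears in the Brion--Mikityuk classification, which by the remark following \Cref{Theorem1} consists exactly of the characteristic-zero forms of (S0)--(S8) in \Cref{Tab:BMKR}. The $\BBZ$-structure on $(\CG,\CH')$ is compatible with this decomposition. The reason is that a decomposition of $G$ into simple factors, and of $H'$ into simple factors and a torus, is determined by root data, and root data are the same in every fiber of a split group scheme. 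So the decomposition in characteristic zero specializes to a decomposition of $(G,H')$ over $k$. Each factor over $k$ is then the $k$-form of a Krämer or Brion--Mikityuk pair, hence lies in \cite[Tables~1 and~2]{KR} or among (S0)--(S8). This shows that $(G,H')$ is known spherical.

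The main obstacle is the identification step. Knowing that the complex factor is isomorphic to some entry of the lists is not enough. We need the $k$-fiber of our particular $\BBZ$-form to be isogenous to the $k$-form of that entry, as recorded in \cite{KR} and \Cref{Tab:BMKR}. The first approach is to use that a connected reductive subgroup of a split group, defined over $\BBZ$, is determined up to conjugacy and isogeny by the embedding of its root datum (weights of the representation) into that of $G$. This combinatorial datum does not change under base change, so the complex isomorphism carries over to $k$. If this fails in a few cases, where two non-isogenous $\BBZ$-forms have isomorphic complex fibers (the Frobenius twists and exceptional isogenies in characteristic $2$ and $3$ are the typical danger), I would verify those cases by hand. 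For this I would use the explicit $\BBZ$-forms of the finitely many Brion--Mikityuk families, each of which is given by an explicit representation.
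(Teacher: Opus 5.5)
This is essentially the paper's proof. You assume $(G,H')$ is spherical, lift via \Cref{Theorem1}, invoke the characteristic-zero classifications of Krämer and Brion--Mikityuk, and come back using that the reductions mod~$p$ of their lists lie in \cite[Tables~1 and~2]{KR} and \Cref{Tab:BMKR}; the paper states this containment without further justification.

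Your extra discussion of the identification step is reasonable, but drop the proposed general principle. In positive characteristic the weights of an embedding defined over $\BBZ$ do not determine the subgroup up to conjugacy and isogeny: for $\SL(2)$ in characteristic~$2$, lattices in $S^2\BBC^2\oplus\BBC$ reduce both to $V(2)\oplus k$ and to an indecomposable module with the same composition factors. So what is really needed is the case-by-case check. For the factors with simple $G$ you can skip it altogether by applying the classification of \cite{KR} directly to the spherical $k$-factor.
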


\begin{proof}
  Assume that the pair $(G,H')$ is spherical. Then, by \Cref{Theorem1}, its
  lift to characteristic zero is spherical as well. Hence, this lift is
  known spherical by the work of Krämer, Mikityuk, and Brion. Since the
  reductions modulo $p$ of their tables are contained in our tables, $(G,H')$
  is also known spherical.
\end{proof}

\begin{remark}
  \label{rem:1}
  Observe that every indecomposable factor of a known spherical pair
  is isogenous to one that lifts to characteristic zero, except for
  the cases isogenous to case (C18) in
  \Cref{table:sp-classical} or case (S9) in \Cref{Tab:BMKR}.
\end{remark}

Let $H$ be a connected reductive group and let $H'\subset H$ be a
maximal connected reductive subgroup. Extending the version of Dynkin's Lemma
\cite[Lem.~3.3]{Knop5} slightly from semisimple to
reductive groups, we obtain the following cases where $H'$ is obtained from
$H$
\begin{enumerate}[\text{(\alph{enumi})}]
\item\label{Max1} by replacing the connected center of $H$ with a subtorus of
  codimension one, or
\item \label{Max2} by replacing a simple factor of $H$ with a maximal
  connected reductive subgroup, or
\item \label{Max3} by replacing two simple factors $H_1$ and $H_2$ of $H$ with a
  connected subgroup $\tilde H\subset H_1\times H_2$ for which both
  projections $\tilde H\to H_i/(H_1\cap H_2)$ are isogenies.
\end{enumerate}

\begin{proof}[Proof of \Cref{lem:1} in case \ref{Max1}]
  By \Cref{rem:1}, we may assume that all factors of $(G,H)$ lift to
  characteristic zero, except for those of type (C18) or (S9). Since
  these exceptions do not contribute to $Z(H)^0$, we may assume that they do not
  occur.  Thus, $(G,H)$ lifts to characteristic zero. The torus
  $Z(H)^0$ lifts to characteristic zero as well, along with all of its
  subgroups. Thus, $(G,H,H')$ lifts to characteristic zero, and the
  assertion follows from \Cref{lem:2}.
\end{proof}

\emph{Proof of \Cref{lem:1} in case \ref{Max2}}.  Assume that $(G,H')$ is
spherical.  We may further assume that $(G,H)$ is indecomposable, as
only one indecomposable factor of $(G,H)$ is modified in case \ref{Max2}.  If
$G$ is simple, then $(G,H')$ appears in \cite[Tables 1 and 2]{KR} and
is thus known spherical. If $G$ is not simple, then
$(G,H)$ is isogenous to one of the items in \Cref{Tab:BMKR}. We now
analyze these cases individually.

(S0) is dealt with by the following lemma.

\begin{lemma}
  \label{lem:3}
  Let $G$ be simple and let $H \subset G$ be a maximal connected
  reductive subgroup. Then $(G\times G,\Delta H)$ is not spherical.
\end{lemma}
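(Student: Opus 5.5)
We argue by contradiction. Suppose $(G\times G,\Delta H)$ were spherical. By \Cref{lemma:weak}, applied to $\Delta H\subseteq H\times G\subseteq G\times G$, the subgroup $\Delta H$ is then spherical in $H\times G$. Equivalently, $G$ regarded as an $(H\times G)$-variety via left and right multiplication is spherical. Passing to the quotient by the right $G$-action, this means that $H$ has an open orbit on $G/B$; in other words, $H$ is spherical in $G$. We then obtain a contradiction from \Cref{lemma3} together with the dimension estimates of \Cref{lem:diag1} and \Cref{lem:4}.

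\textbf{Dimension count.} The numerical criterion \Cref{lemma3} for $\Delta H\subset G\times G$ reads
\[
\dim H\geqslant \dim (G\times G)/(B\times B)=\dim G-\rk G .
\]
This is equivalent to $\dim G/H\leqslant \rk G$.

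\textbf{The reductive case.} Since $H$ is reductive and proper, it is not parabolic. \Cref{lem:diag1} therefore shows that $\dim H$ is strictly smaller than the maximal dimension of a proper subgroup, which is the dimension of a maximal parabolic subgroup. This alone does not finish the argument, so we combine it with \Cref{lem:4}, which gives $\dim G/H\geqslant \rk G$. Together with the bound above we get $\dim G/H=\rk G$ exactly. This equality is what must be ruled out.

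\textbf{Main obstacle: excluding $\dim G/H=\rk G$.} We go back to the proof of \Cref{lem:4}. There, a generic $x\in G/H$ has $T$-orbit of dimension $\rk G$. If $\dim G/H=\rk G$, then $Tx$ is open in $G/H$, so the torus $T$ alone acts on $G/H$ with an open orbit. A homogeneous space with an open orbit of a torus is toric, so $T$ has only finitely many orbits on $G/H$. Moreover, $G/H$ is affine, since $H$ is reductive.

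An affine toric variety for $T$ with trivial generic stabilizer has coordinate ring a semigroup algebra whose units are characters of $T$. On the other hand, $k[G/H]^\times=k^\times$, because $G$ is semisimple and has no nontrivial characters, while $G/H$ is irreducible. An affine variety with an open torus orbit whose only invertible functions are constants contains a $T$-fixed point in the closure of the open orbit; since $G/H$ is itself affine and toric, such a $T$-fixed point lies in $G/H$. Hence some conjugate of $H$ contains $T$, so $H$ has maximal rank.

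For $H$ of maximal rank we have $\dim G/H=\#\Phi_G-\#\Phi_H$, which is even. Thus
\[
\rk G=\#\Phi_G-\#\Phi_H\geqslant \#\Phi_G/2,
\]
using that a proper reductive subsystem has at most half the roots; this bound would need a quick check. But $\#\Phi_G\geqslant 2\rk G$, with equality only in rank one. So equality is forced throughout, which gives $G$ of type $A_1$ and $H=T$. In that case $\dim G-\rk G=2>1=\dim H$, so this case fails as well, yielding the contradiction.

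The step I expect to need the most care is the toric argument producing a $T$-fixed point. As a fallback, I would instead use $L(G/H)$ and \Cref{lemma:hard}, or a Lie-algebra argument showing that $\mathfrak h$ contains a Cartan subalgebra.
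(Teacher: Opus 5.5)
Your argument breaks down at the final counting step. The claim that a proper reductive subsystem contains at most half of the roots is false. For example, $\GL(n)\subset\SL(n+1)$ has $n(n-1)$ of the $n(n+1)$ roots, which is more than half for $n\geqslant 4$. Likewise, $\SOO(2n)\subset\SOO(2n+1)$ has $2n(n-1)$ of the $2n^2$ roots, more than half for $n\geqslant 3$. So $\rk G=\#\Phi_G-\#\Phi_H\geqslant\#\Phi_G/2$ does not follow, and the ``equality throughout'' conclusion collapses. The toric argument that yields maximal rank looks fine, and the maximal-rank case could be rescued another way. At the $T$-fixed point, the $T$-weights on the tangent space $\mathfrak g/\mathfrak h$ are $\Phi_G\setminus\Phi_H$, a nonempty set stable under $\alpha\mapsto-\alpha$. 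But at the fixed point of an affine toric variety with only constant units, all tangent weights lie in an open half-space, which such a set cannot do.

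More importantly, the whole detour is unnecessary. You concluded that \Cref{lem:diag1} ``alone does not finish the argument'' only because you applied \Cref{lem:4} to $H$ itself. Apply it instead to a proper subgroup $H'$ of maximal dimension. Since the reductive subgroup $H$ is not parabolic, \Cref{lem:diag1} gives $\dim H<\dim H'$, and \Cref{lem:4} gives $\dim G/H'\geqslant\rk G$. Hence $\dim G/H>\dim G/H'\geqslant\rk G$, which strictly contradicts your bound $\dim G/H\leqslant\rk G$. This is exactly the paper's proof. The only difference is how the upper bound is obtained: the paper uses \Cref{lemma:hard} with $L(G\times G/\Delta G)=T$, so that $T$ would need an open orbit on $G/H$. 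Your direct application of \Cref{lemma3} to $\Delta H\subset G\times G$ yields the same inequality equally well. Your opening reduction to ``$H$ spherical in $G$'' is never used afterwards.
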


\begin{proof}
  We have $\Delta H \subset \Delta G \subset G \times G$. Since
  $L(G \times G/\Delta G) = T$, where $T$ is a maximal torus of $G$
  (see \Cref{Tab:BMKR}), it suffices to show that
  $\Delta G/\Delta H \cong G/H$ is not spherical as a $T$-variety  (see
  \Cref{lemma:hard}). By \Cref{lem:diag1}, $G$
  contains a proper subgroup $H'$ such that $\dim G/H > \dim
  G/H'$. Moreover, by \Cref{lem:4}, we have $\dim G/H' \geqslant \dim
  T$. Thus, $G/H$ is not spherical  as a $T$-variety, as claimed.
\end{proof}

For the cases where $(G,H)$ is of type (S1) or (S2), we have
$L(G/H)=1$ by \Cref{Tab:BMKR}. Thus, the pair $(G,H')$ is not
spherical by \Cref{lemma:hard}.

Let $(G,H)$ be one of the pairs (S3) -- (S8).  Since $H/H'$ is also
spherical (\Cref{lemma:weak}), it occurs in
\Cref{table:sp-classical}.  Hence, for $p \ne 2$, the pair $(H,H')$ can be
lifted to characteristic zero. Consequently, $(G,H,H')$ can also be lifted, and
the assertion follows from \Cref{lem:2}.

Now suppose $p =2$. Then we require the following lemma.

\begin{lemma}
  \label{lem:p21}
  Let $p=2$ and $n\geqslant 2$. Then $\Sp(2n)/\SOO(2n)$ is not spherical as
  a $(\Gm \times \Sp(2n-2))$-variety. In particular, it is
  not spherical as an $\Sp(2n-4)$-variety. Moreover, $\Sp(6)/\G_2$ is
  not spherical as an $\Sp(4)$-variety.
\end{lemma}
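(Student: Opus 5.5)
The plan is to handle the three assertions separately. The first uses an explicit model of $\Sp(2n)/\SOO(2n)$ in characteristic $2$, the second follows formally from the first, and the third is a dimension count.

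\emph{The model.} Let $p=2$ and let $V=k^{2n}$ carry the symplectic form $\langle\,,\rangle$. Then $\SOO(2n)=\O(2n)^0$ is the identity component of the stabilizer of a quadratic form $q_0$ whose polar form is $\langle\,,\rangle$. The set of all quadratic forms with polar form $\langle\,,\rangle$ is $\{q_0+\lambda^2 \mid \lambda\in V^*\}$, an affine space $\mathbb{A}$ of dimension $2n$. The group $\Sp(2n)$ acts on $\mathbb{A}$ transitively by $q\mapsto q\circ g^{-1}$, and this action is affine with linear part the Frobenius twist $(V^*)^{(1)}$. Hence $\Sp(2n)/\O(2n)\cong\mathbb{A}$. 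Since $\SOO(2n)$ has index $2$ in $\O(2n)$, the map $\Sp(2n)/\SOO(2n)\to\mathbb{A}$ is finite and surjective, so a subgroup has an open Borel orbit on the source if and only if it has one on $\mathbb{A}$. It therefore suffices to show that $L:=\Gm\times\Sp(2n-2)$, embedded in $\Sp(2n)$ as the stabilizer of a decomposition $V=U\oplus W$ with $U$ a hyperbolic plane and $W=U^\perp$, has no open $B_L$-orbit on $\mathbb{A}$.

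\emph{Linearizing the action.} Because $\Gm$ is linearly reductive, it has a fixed point on $\mathbb{A}$; taking that point as origin makes the $\Gm$-action linear. On the linear part, $\Gm$ acts on the twisted plane $U^{(1)}$ with weights $\pm2$ and trivially on $W^{(1)}$. The action of $\Sp(2n-2)$ is affine, $v\mapsto A_gv+c_g$, and it commutes with $\Gm$. Commutation forces $c_g$ to be $\Gm$-fixed, so $c_g\in W^{(1)}$. It also forces $A_g$ to preserve the weight spaces, and $A_g$ acts trivially on $U^{(1)}$. Let $x,y$ be the coordinates on $U^{(1)}$. Then $x$ and $y$ are $\Sp(2n-2)$-invariant and have $\Gm$-weights $\pm2$. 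Consequently $xy$ is a nonconstant $L$-invariant regular function on $\mathbb{A}$. A nonconstant $B_L$-invariant function is incompatible with an open $B_L$-orbit, so $\mathbb{A}$, and hence $\Sp(2n)/\SOO(2n)$, is not $L$-spherical.

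\emph{The second assertion.} $\Sp(2n-4)$ is contained in $\Sp(2n-2)\subset L$. A Borel subgroup of the smaller group lies in a Borel subgroup of $L$, so if $L$ has no open Borel orbit, neither does $\Sp(2n-4)$.

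\emph{The third assertion.} Here $\dim\Sp(6)/\G_2=21-14=7$, while a Borel subgroup of $\Sp(4)$ has dimension $6<7$. So no Borel orbit of $\Sp(4)$ can be open, whatever the embedding of $\Sp(4)$ in $\Sp(6)$.

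The main obstacle is the identification of $\Sp(2n)/\O(2n)$ with the affine space of quadratic forms and the correct description of the affine action, in particular that its linear part is the Frobenius-twisted module. Once this is in place, the rest of the argument is formal.
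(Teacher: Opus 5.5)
Your proposal is correct and follows essentially the paper's argument. The paper also identifies $\Sp(2n)/\O(2n)$ with the affine space $q+\langle x_1^2,\dots,x_{2n}^2\rangle$ of quadratic forms with fixed polarization. It then observes that restriction to the hyperbolic plane $\langle x_1,x_{2n}\rangle$ is a nonconstant $\Sp(2n-2)$-invariant map onto a $2$-dimensional space, on which $\Gm$ cannot have a dense orbit; your $U^{(1)}$-projection and the invariant $xy$ make exactly this explicit. The $\Sp(6)/\G_2$ case is settled in the paper by the same dimension count $6<7$ that you use.
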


\begin{proof}
  Let $(x_1, \ldots, x_{2n})$ be the standard basis of $k^{2n}$. Then
  $S^2k^{2n}$ is the space of quadratic forms in the variables $x_i$.  Since
  $p = 2$, we have the short exact sequence:
  \[
    \begin{tikzcd} 0 \arrow{r} & (k^{2n})^{(2)} \arrow{r} & S^2k^{2n}
      \arrow["\pi"]{r} & \bigwedge\nolimits^2 k^{2n} \arrow{r} &
      0,\end{tikzcd}
  \]
  where we identify $(k^{2n})^{(2)}$ with the subspace spanned by
  $x_i^2$ for $1 \leqslant i \leqslant 2n$ and $\pi$ is the polarization map
  (e.g., see \cite[Def.~7.1]{EKM}).  Let
  $q := x_1 x_{2n} + \ldots +x_n x_{n+1}$.  Then
  $\omega := \pi (q) = x_1 \wedge x_{2n} + \ldots +x_n \wedge x_{n+1}$
  defines the group $\Sp(2n)$.  Let
  $X := \pi^{-1}(\omega) = q + \langle x_1^2, \ldots , x_{2n}^2
  \rangle \cong \mathbb{A}^{2n}$. Then $X$ can be identified with
  $\Sp(2n)/\O(2n)$. On the other hand, the map
  $X \to S^2 k^2 = S^2 \langle x_1 , x_{2n} \rangle$ is
  $\Sp(2n-2)$-invariant and non-constant. This shows that the quotient
  $X\mod\Sp(2n-2)$ is at least $2$-dimensional. Consequently, not even
  $\mathbb{G}_m \times \Sp(2n-2)$ has a dense orbit in $X$, and therefore
  neither does its Borel subgroup. The same holds for the double cover
  $\Sp(2n)/\SOO(2n)$.  The last statement follows from the dimension
  criterion in \Cref{lemma3}.
\end{proof}

We now return to the case $H'\subset H\subset G$ for $p=2$. Note that
all connected subgroups of $\Sp(2)$ still lift to characteristic
zero. Thus, we can argue as before.  Therefore only the additional
cases $H/H' \sim \Sp(2n)/\SOO(2n)\sim\SOO(n+1)/\SOO(n)$ for $n \geqslant 2$
and $H/H'\sim\Sp(6)/\G_2$, which do not lift, need to be considered.
Observe that, by \Cref{lem:p21}, $H/H'$ is not spherical as an
$L(G/H)$-variety in each instance with the exception of case (S6) when
$H/H' \cong \Sp(6)/\G_2$.  In this case replacing the $\Sp(6) $-factor
with $\G_2$ results in a spherical pair, namely (S9). Hence, in all
cases, $(G,H')$ is either not spherical or known spherical.
 
Finally, let $(G,H)$ be of type (S9), and let $H_0$ be a maximal
reductive subgroup of $\G_2$. Let
$H' = H_0 \times \Sp(2) \times \Sp(2n)$. If $G/H'$ were spherical,
then the quotient $\Sp(8)/(H_0 \times \Sp(2))$ would also be
spherical. However, this contradicts the dimension bound \eqref{eq:2}.
The same argument works for the $\Sp(2)$-factor.

If, on the other hand, the $\Sp(2n)$-factor is replaced with a maximal
reductive subgroup $H_0$, then $H'$ is contained in
$\Sp(6) \times \Sp(2) \times \Sp(2n)$.  The discussion of case
  (S7) above shows that $\Sp(6) \times \Sp(2) \times H_0$ is
spherical if and only if $H_0 = \G_2$. However,
$\G_2 \times \Sp(2) \times \G_2$ inside $\Sp(8) \times \Sp(8)$ is not
spherical by the dimension bound \eqref{eq:2}.  This concludes the
proof of \Cref{lem:1} in case \ref{Max2}.
	
\emph{Proof of \Cref{lem:1} in case \ref{Max3}.} Let $H_1'$ and $H_2'$ be two
different simple factors of $H$, and for $i = 1,2$, let
$\rho_i:\tilde H\to H_i'$ be isogenies. Then, $H'$ is obtained
by replacing $H_1'H_2'\subseteq H$ with the image of
$\rho_1\cdot\rho_2$. There are now two cases to consider, namely:
\begin{enumerate} [\text{(\roman*)}]
\item\label{it:A1} the subgroups $H_i'$ lie in the same indecomposable component of $(G,H)$,
  and
\item\label{it:A2} the subgroups $H_i'$ lie in two different components of
  $(G,H)$.
\end{enumerate}
Note also that the isogenies $\rho_i$ might not be central, which could
prevent liftability to characteristic zero.

We claim that in case \ref{it:A1}, the pair $(G,H')$ is not spherical. If $G$ is
simple, the cases where $H$ has two isogenous factors are:
$(\SL(4),\SOO(4))$, $(\SL(2n),\SL(n)\Gm\SL(n))$ for $n\geqslant 2$,
$(\Sp(4n),\Sp(2n)\Sp(2n))$ for $n\geqslant 1$, $(\SOO(2n),\SOO(n)\SOO(n))$
for $n\geqslant 3$, and $(\SOO(n), \SOO(4)\SOO(n-4))$.  None of these cases lead to
$(G,H')$ being spherical, as can be seen by inspecting \Cref{table:sp-classical}.

If $G$ is not simple, then $(G,H)$ belongs to \Cref{Tab:BMKR}. If $H_1'$ and
$H_2'$ are subgroups of the same simple factor of $G$, then the
projection of $(G,H')$ is not spherical by the case where $G$ is simple. Thus, we
must still check case (S4) with $(m,n)=(2,1)$ (which also settles (S5)), case (S6)
with $m=n$ (which also settles (S7) with the first and
third factors of $H$ identified and (S8)), and case (S7) with $m=n$. By
\Cref{lemma:hard}, we need to show that $H_0\sim H_1'H_2'/H_0$ is not
spherical as an $L(G/H)$-variety. This results in an action of
$\Sp(2n-2)\times\Sp(2n-2)$ on $\Sp(2n)$, which is not spherical
by \Cref{lemma3}. This concludes the proof in case \ref{it:A1}.

In case \ref{it:A2}, let $(G,H)=(G_1,H_1)\times(G_2,H_2)$ be such that a
factor $H_0$ of $H'$ is embedded diagonally in $H_1\times H_2$. First,
we treat the case where $(G_2,H_2)$ is a trivial pair, i.e.,
$G_2=H_2$. Then, the diagram\footnote{Here, and in
  \Cref{Tab:BMKR} in particular, this graphical description of a pair $(G,H)$ generalizes the notation from \cite{mik}: nodes symbolize simple (or
  trivial) factors, and edges indicate isogenies onto their images.} of
$(G,H')$ looks like
\begin{equation}\label{eq:G1G2H0H1H2}
  \begin{tikzpicture}[baseline=(current bounding box.center), scale=0.6, every node/.style={inner sep=2pt}]
    \draw (0,0) -- (1,1); \draw (1,1) -- (2,0); \draw (2,0) -- (3,1);
    \fill (0,0) circle (0.1) node[below] {$\scriptstyle
      H_0'$}; \fill (1,1) circle (0.1) node[above]
    {$\scriptstyle
      G_1$}; \fill (2,0) circle (0.1) node[below]
    {$\scriptstyle
      H_0$}; \fill (3,1) circle (0.1) node[above]
    {$\scriptstyle
      G_2$}; \fill (1.3,0.5) circle (0) node[below]
    {$\scriptstyle
      \rho_1$}; \fill (2.7,0.5) circle (0) node[below]
    {$\scriptstyle \rho_2$};
  \end{tikzpicture}
\end{equation}
where $H_0'$ is a connected reductive group (possibly trivial),
$\rho_1$ has a finite kernel, and $\rho_2$ is an isogeny. Let $\tilde H$
be the image of $H_0$ in $G_1\times G_2$, and let $p_i:\tilde H\to G_i$
be the projection for $i = 1,2$. Then,
$(\id,p_2):G_1\times \tilde H\to G_1\times G_2$ is an isogeny that
maps $(p_1,\id)(\tilde H)$ to $(\rho_1,\rho_2)(H_0)$. On the other
hand, the isogeny
$(\id,p_1):G_1\times \tilde H\to G_1\times\rho_1(H_0)$ maps
$(p_1,\id)(\tilde H)$ to $(\id,\id)(\rho_1(H_0))$. Thus, we have
shown that $(G,H')$ is isogenous to a pair where $\rho_1$ is an
inclusion and $\rho_2$ is the identity. In particular, this implies
that if $(G_1,H_0'\times H_0)$ lifts to characteristic zero, then
$(G,H')$ also does. Therefore, the assertion of \Cref{lem:1} is
proved in all cases except when $(G_1,H_1)$ is of type
(C18), which cannot be lifted to characteristic zero. If $H_0=\Sp(2)$,
we obtain case (S9) with $n=2$ which is therefore known spherical. If, on the other hand, $H_0=\G_2$,
we obtain the pair $(\Sp(8)\times\G_2,\Sp(2)\times\G_2)$, which is not
spherical by the dimension bound \eqref{eq:2}. Thus, we are done with
pairs of type \eqref{eq:G1G2H0H1H2}.

The analysis of pairs of type \eqref{eq:G1G2H0H1H2} also shows which
factors can occur as $(G_1,H_1)$. In fact, these are the ones fitting
into the diagram \eqref{eq:G1G2H0H1H2} that occur in
\Cref{Tab:BMKR}. Thus, we obtain the following cases:
\begin{center}
  \begin{tabular}{l|l|l|l|l}
    Type&$G$&$H$&&$L'$\\
    \hline
    (S$0'$)&$H$&$\ul{H}$&$H$ simple&T\\
    (S$1'$)&$\SL(n+1)$&$\Gm\,\ul{\SL(n)}$&$n\geqslant 3$&$\GL(n-1)$\\
    (S$2'$)&$\SOO(n+1)$&$\ul{\SOO(n)}$&$n\geqslant 5$&$\SOO(n-1)$\\
    (S$3'$)&$\Sp(2n+4)$&$\Sp(2n)\,\ul{\Sp(4)}$&$n \geqslant 1$&$\Sp(2)\Sp(2)$\\
    (S$4'$)&$\SL(n+2)$&$\Gm\SL(n)\,\ul{\Sp(2)}$&$n \geqslant 1$&$\Gm$\\
    (S$5'$)&$\SL(n+2)$&$\SL(n)\,\ul{\Sp(2)}$&$n \geqslant 3$&$\Gm$\\
    (S$6'$)&$\Sp(2n+2)$&$\Sp(2n)\,\ul{\Sp(2)}$&$n \geqslant 0$&$\Sp(2)$\\
    (S$7'$)&$\Sp(2n+2)\,\Sp(4)$&$\Sp(2n)\,\Sp(2)\,\ul{\Sp(2)}$&$n \geqslant 0$&$\Gm$\\
    (S$8'$)&$\Sp(2m+2)\,\Sp(2n+2)$&$\Sp(2m)\,\ul{\Sp(2)}\,\Sp(2n)$&$m,n \geqslant 0$&$\Gm$\\	
    (S$9'$)&$\Sp(8)$&$\G_2\,\ul{\Sp(2)}$&$p=2$&$\Gm$
  \end{tabular}	
\end{center}

Here, the factor $H_0$ is underlined. Note that we can ignore
(S$0'$) since we may assume that both factors $(G_i,H_i)$ are
non-trivial.

Assume first that $H_0\sim\Sp(2)$. Then, by \Cref{lemma:hard}, the pair $(G,H')$ is
spherical if and only if $\Sp(2)$ is a spherical
$(L_1\times L_2)$-variety, where $L_i$ is the image of $L(G_i,H_i)$ in
the $H_0$-factor of $H_i$. It is easily verified that $L_i\cong \Gm$
in cases (S$4'$)--(S$9'$) except for case
(S$6'$), where $L_i\cong\Sp(2)$. This implies that the fusion
$(G,H')$ of two cases among (S$4'$)--(S$9'$) is spherical
if and only if one of the factors is of type (S$6'$). To show
that in this case $(G,H')$ is indeed known spherical, i.e., that
it appears in our tables, we need to show that $(G,H')$ is independent
of the gluing isogenies $\rho_1$ and $\rho_2$. Note that all
self-isogenies of $\Sp(2)$ are powers of the Frobenius morphism
$F_{H_0}$. Thus, $\rho_1=F_{H_0}^{a_1}$ and
$\rho_2=F_{H_0}^{a_2}$. Then, the isogeny
$\tau=(F_{G_1}^{a_2},F_{G_2}^{a_1})$ of $G_1\times G_2$ yields an
isogeny from $(G,H')$ to a known spherical pair.

Assume now that $H_0\sim\Sp(4)\sim\SOO(5)$, which occurs for case (S$3'$) and
case (S$2'$) with $n=5$. In all these cases, the projection of $L(G_i,H_i)$
is isogenous to $\Sp(2)\times\Sp(2)$. Since the action of
$(\Sp(2)\times\Sp(2))^2$ on $\Sp(4)$ is not spherical (see \eqref{eq:2}),
the pair $(G,H')$ is also not spherical.

Next, let $H_0\sim\SL(4)\sim\SOO(6)$. This occurs for (S$1'$) with
$n=4$ and (S$2'$) with $n=5$. Since $L_1\sim T^1\SL(3)$ or
$\SOO(5)$, the action of $L_1\times L_2$ on $H_0$ is not spherical due
to the dimension bound.

The same holds for all other cases of type (S$1'$) or (S$2'$), which concludes the proof of \ref{it:A2} and therefore \Cref{lem:1}.

\bibliographystyle{amsalpha}


\setcounter{myrow}{-1}

\begin{table}
\caption{Indecomposable spherical pairs $H \subset G$ for $G$ semisimple.}
	\label{Tab:BMKR}
        \renewcommand{\arraystretch}{2.5}
$\begin{array}{>{\stepcounter{myrow}\text{(S\themyrow})\quad}l l l l}
	\multicolumn{1}{c}{\hspace{30pt}(G,\ H)\hfill} && L(G/H)\\
	\hline
	\hline
	\rule{0pt}{4ex}%
	\begin{tikzpicture}[baseline=(current bounding box.center), scale=0.6, every node/.style={inner sep=2pt}]
		\draw (0,1) -- (1,0);
		\draw (1,0) -- (2,1);
		\fill (0,1) circle (0.1) node[above] {$\scriptstyle H$};
		\fill (2,1) circle (0.1) node[above] {$\scriptstyle H$};
		\fill (1,0) circle (0.1) node[below] {$\scriptstyle H$};
	\end{tikzpicture}& \scriptstyle H\text{ \scriptsize simple}
	& \text{max.~torus}
	\\[4ex]
	
	\begin{tikzpicture}[baseline=(current bounding box.center), scale=0.6, every node/.style={inner sep=2pt}]
		\draw (0,0) -- (0,1);
		\draw (0,1) -- (1,0);
		\draw (1,0) -- (2,1);
		\fill (0,0) circle (0.1) node[below] {$\scriptstyle \ft^1$};
		\fill (0,1) circle (0.1) node[above] {$\scriptstyle \SL(n{+}1)$};
		\fill (2,1) circle (0.1) node[above] {$\scriptstyle \SL(n)$};
		\fill (1,0) circle (0.1) node[below] {$\vrule height 8pt width0pt depth0pt\scriptstyle \SL(n)$};
	\end{tikzpicture}& \scriptstyle n \geqslant 2
	& 1
	\\[4ex]
	
	\begin{tikzpicture}[baseline=(current bounding box.center), scale=0.6, every node/.style={inner sep=2pt}]
		\draw (0,1) -- (1,0);
		\draw (1,0) -- (2,1);
		\fill (0,1) circle (0.1) node[above] {$\scriptstyle \SOO(n{+}1)$};
		\fill (2,1) circle (0.1) node[above] {$\scriptstyle \SOO(n)$};
		\fill (1,0) circle (0.1) node[below] {$\scriptstyle \SOO(n)$};
	\end{tikzpicture}& \scriptstyle n \geqslant 5
	& 1
	\\[4ex]
	
	\begin{tikzpicture}[baseline=(current bounding box.center), scale=0.6, every node/.style={inner sep=2pt}]
		\draw (0,0) -- (1,1);
		\draw (1,1) -- (2,0);
		\draw (2,0) -- (3,1);
		\fill (0,0) circle (0.1) node[below] {$\scriptstyle \Sp(2n)$};
		\fill (1,1) circle (0.1) node[above] {$\scriptstyle \Sp(2n{+}4)$};
		\fill (2,0) circle (0.1) node[below] {$\scriptstyle \Sp(4)$};
		\fill (3,1) circle (0.1) node[above] {$\scriptstyle \Sp(4)$};
	\end{tikzpicture}& \scriptstyle n \geqslant 1
	& \Sp(2n-4)
	\\[4ex]
	
	\begin{tikzpicture}[baseline=(current bounding box.center), scale=0.6, every node/.style={inner sep=2pt}]
		\draw (0,0) -- (1,1);
		\draw (1,1) -- (1,0);
		\draw (1,1) -- (2,0);
		\draw (2,0) -- (3,1);
		\draw (3,1) -- (4,0);
		\fill (0,0) circle (0.1) node[below] {$\scriptstyle \SL(m)$};
		\fill (1,1) circle (0.1) node[above, xshift=-0.1cm] {$\scriptstyle \SL(m{+}2)$};
		\fill (1,0) circle (0.1) node[below] {$\scriptstyle \ft^1$};
		\fill (2,0) circle (0.1) node[below] {$\scriptstyle \Sp(2)$};
		\fill (3,1) circle (0.1) node[above, xshift=0.1cm] {$\scriptstyle \Sp(2n{+}2)$};
		\fill (4,0) circle (0.1) node[below] {$\scriptstyle \Sp(2n)$};
	\end{tikzpicture}&\def\arraystretch{1}
	\begin{array}{r} \scriptstyle m \geqslant 1 \\[-0.5ex] \scriptstyle n \geqslant 0 \end{array}
	& \ft^1\cdot\SL(m-2)\times \Sp(2n-2)
	\\[4ex]
	
	\begin{tikzpicture}[baseline=(current bounding box.center), scale=0.6, every node/.style={inner sep=2pt}]
		\draw (0,0) -- (1,1);
		\draw (1,1) -- (2,0);
		\draw (2,0) -- (3,1);
		\draw (3,1) -- (4,0);
		\fill (0,0) circle (0.1) node[below] {$\scriptstyle \SL(m)$};
		\fill (1,1) circle (0.1) node[above, xshift=-0.1cm] {$\scriptstyle \SL(m{+}2)$};
		\fill (2,0) circle (0.1) node[below] {$\scriptstyle \Sp(2)$};
		\fill (3,1) circle (0.1) node[above, xshift=0.1cm] {$\scriptstyle \Sp(2n{+}2)$};
		\fill (4,0) circle (0.1) node[below] {$\scriptstyle \Sp(2n)$};
	\end{tikzpicture}&\def\arraystretch{1}
	\begin{array}{r} \scriptstyle m \geqslant 3 \\[-0.5ex] \scriptstyle n \geqslant 0 \end{array}
	& \SL(m-2)\times \Sp(2n-2)
	\\[4ex]
	
	\begin{tikzpicture}[baseline=(current bounding box.center), scale=0.6, every node/.style={inner sep=2pt}]
		\draw (0,0) -- (1,1);
		\draw (1,1) -- (2,0);
		\draw (2,0) -- (3,1);
		\draw (3,1) -- (4,0);
		\fill (0,0) circle (0.1) node[below] {$\scriptstyle \Sp(2m)$};
		\fill (1,1) circle (0.1) node[above, xshift=-0.2cm] {$\scriptstyle \Sp(2m{+}2)$};
		\fill (2,0) circle (0.1) node[below] {$\scriptstyle \Sp(2)$};
		\fill (3,1) circle (0.1) node[above, xshift=0.2cm] {$\scriptstyle \Sp(2n{+}2)$};
		\fill (4,0) circle (0.1) node[below] {$\scriptstyle \Sp(2n)$};
	\end{tikzpicture}&\def\arraystretch{1}
\begin{array}{r} \scriptstyle m \geqslant 0 \\[-0.5ex] \scriptstyle n \geqslant 0 \end{array}
	&\Sp(2m-2)\cdot\ft^1\cdot\Sp(2n-2)
	\\[4ex]
	
	\begin{tikzpicture}[baseline=(current bounding box.center), scale=0.6, every node/.style={inner sep=2pt}]
		\draw (0,0) -- (1,1) -- (2,0) -- (3,1) -- (4,0) -- (5,1) -- (6,0);
		\fill (0,0) circle (0.1) node[below] {$\scriptstyle \Sp(2m)$};
		\fill (1,1) circle (0.1) node[above] {$\scriptstyle \Sp(2m{+}2)$};
		\fill (2,0) circle (0.1) node[below] {$\scriptstyle \Sp(2)$};
		\fill (3,1) circle (0.1) node[above] {$\scriptstyle \Sp(4)$};
		\fill (4,0) circle (0.1) node[below] {$\scriptstyle \Sp(2)$};
		\fill (5,1) circle (0.1) node[above] {$\scriptstyle \Sp(2n{+}2)$};
		\fill (6,0) circle (0.1) node[below] {$\scriptstyle \Sp(2n)$};
	\end{tikzpicture}&\def\arraystretch{1}
	\begin{array}{r} \scriptstyle m \geqslant 0 \\[-0.5ex] \scriptstyle n \geqslant 0 \end{array}
	&\Sp(2m-2)\times\Sp(2n-2)
	\\[4ex]
	
	\begin{tikzpicture}[baseline=(current bounding box.center), scale=0.6, every node/.style={inner sep=2pt}]
		\draw (0,0) -- (1,1);
		\draw (1,1) -- ++(4,-1);
		\draw (2,0) -- ++(1,1);
		\draw (3,1) -- ++(2,-1);
		\draw (4,0) -- ++(1,1);
		\draw (5,1) -- ++(0,-1);
		\fill (0,0) circle (0.1) node[below] {$\scriptstyle \Sp(2l)$};
		\fill (1,1) circle (0.1) node[above, xshift=-0.4cm] {$\scriptstyle \Sp(2l{+}2)$};
		\fill (2,0) circle (0.1) node[below] {$\scriptstyle \Sp(2m)$};
		\fill (3,1) circle (0.1) node[above] {$\scriptstyle \Sp(2m{+}2)$};
		\fill (4,0) circle (0.1) node[below, xshift=-0.2cm] {$\scriptstyle \Sp(2n)$};
		\fill (5,1) circle (0.1) node[above, xshift=0.4cm] {$\scriptstyle \Sp(2n{+}2)$};
		\fill (5,0) circle (0.1) node[below, xshift=0.2cm] {$\scriptstyle \Sp(2)$};
	\end{tikzpicture}&\def\arraystretch{1}
	\begin{array}{r} \scriptstyle l \geqslant 0 \\[-0.5ex] \scriptstyle m \geqslant 0 \\[-0.5ex] \scriptstyle n \geqslant 0 \end{array}
	&\Sp(2l-2)\times\Sp(2m-2)\times\Sp(2n-2)
	\\[4ex]
	
	\hline
	\begin{tikzpicture}[baseline=(current bounding box.center), scale=0.6, every node/.style={inner sep=2pt}]
		\draw (0,0) -- (1,1);
		\draw (1,1) -- (2,0);
		\draw (2,0) -- (3,1);
		\draw (3,1) -- (4,0);
		\fill (0,0) circle (0.1) node[below] {$\scriptstyle \G_2$};
		\fill (1,1) circle (0.1) node[above, xshift=-0.2cm] {$\scriptstyle \Sp(8)$};
		\fill (2,0) circle (0.1) node[below] {$\scriptstyle \Sp(2)$};
		\fill (3,1) circle (0.1) node[above, xshift=0.2cm] {$\scriptstyle \Sp(2n{+}2)$};
		\fill (4,0) circle (0.1) node[below] {$\scriptstyle \Sp(2n)$};
	\end{tikzpicture}&\def\arraystretch{1}
	\begin{array}{l}\color{red}\scriptstyle\operatorname{char}k=2\\[-0.5ex] \scriptstyle n \geqslant 0   \end{array}
	& \Sp(2n-2)
	\\[3ex]
	\hline
\end{array}
$
\end{table}

\end{document}